\providecommand{\U}[1]{\protect\rule{.1in}{.1in}}
\theoremstyle{plain}
\newtheorem{corollary}{Corollary}
\newtheorem{lemma}{Lemma}
\newtheorem{proposition}{Proposition}
\newtheorem{theorem}{Theorem}
\theoremstyle{remark}
\newtheorem{remark}{Remark}
\numberwithin{equation}{section}
\newcommand{\norm}[1]{\left\vert\left\vert #1\right\vert\right\vert}
\newcommand{\R}{\ensuremath{\mathbb{R}}}
\newcommand{\Z}{\ensuremath{\mathbb{Z}}}
\begin{document}

\title[]
{Spectral inequality for Schr\"odinger equations with power growth potentials }
\author{ Jiuyi Zhu}
\address{
Department of Mathematics\\
Louisiana State University\\
Baton Rouge, LA 70803, USA\\
Email:  zhu@math.lsu.edu }
\author {Jinping Zhuge}
\address{
 Morningside Center of Mathematics\\
the Academy of Mathematics and Systems Science\\ Chinese Academy of Sciences\\
Beijing, China\\
Email: jpzhuge@amss.ac.cn }
\subjclass[2010]{35J10, 35P99, 47A11, 93B05.} \keywords {Spectral inequality, Schr\"odinger equations, Carleman
estimates}

\begin{abstract}
We prove a spectral inequality for Schr\"odinger equations with power growth potentials, which particularly confirms a conjecture in \cite{DSV}. This  spectral inequality depends on the decaying density of the sensor sets, and the growth rate of potentials. The proof relies on three-ball inequalities derived from modified versions of quantitative global and local Carleman estimates that take advantage of the gradient information of the potentials.
\end{abstract}

\maketitle
\section{Introduction}

A spectral inequality in control theory for a nonnegative self-adjoint elliptic operator $H$ takes the following form
\begin{align}
 \|\phi\|_{L^2(\mathcal{M})}\leq C_0 e^{C_1 \lambda^t}  \|\phi\|_{L^2(\Omega)}, \quad \text{for any } \phi\in  Ran(P_\lambda(H)),
 \label{spec-in}
\end{align}
with some universal constants $t\in (0,1), C_0>0, C_1>0$, where $(\mathcal{M},g)$ is a Riemannian manifold, $\Omega \subset \mathcal{M}$ is a measurable subset, and $P_\lambda(H) = \chi_{(-\infty, \lambda)}(H)$ is the resolution of identity associated to $H$.
If $\mathcal{M}$ is a smooth compact manifold and $H=-\triangle_g$ is the negative Laplace-Beltrami operator on  $\mathcal{M}$, then $P_\lambda(H)$ is the spectral projection and $Ran(P_\lambda(H))$ consists of finite sums of Laplace eigenfunctions, i.e., $\phi =\sum_{\lambda_k\leq \lambda} \phi_{k}$, where
\begin{equation}
-\triangle_g \phi_{k} = \lambda_k\phi_{k} \quad \mbox{on} \ \mathcal{M}.
\label{classs}
\end{equation}
In this case, the spectral inquality \eqref{spec-in} holds with the sharp exponent $t = \frac12$, which originated from \cite{LR}, \cite{JL} and \cite{LZ}. This result was used to study the null-controllability problem for the corresponding heat equation in \cite{LR}, the Hausdorff measure of nodal sets for finite sums of eigenfunctions in \cite{JL}, and the null-controllability of thermoelasticity system in \cite{LZ}. 
See the review article \cite{LL} for more extensive literature on the theory and applications in this situation.

The spectral inequality (\ref{spec-in}) is also closely related to the quantitative unique continuation property for the second order elliptic equations. For example, the following sharp doubling inequality
\begin{equation}\|\phi_{k}\|_{L^2(\mathbb
B_{2r}(x))}\leq  e^{C{\sqrt{\lambda_k}}}\|\phi_{k}\|_{L^2(\mathbb
B_{r}(x))}, \quad \text{for all } \mathbb B_{2r}(x)\subset \mathcal{M},
\label{doub}
\end{equation}
was obtained in \cite{DF}  for Laplace eigenfunctions in (\ref{classs}), where $C$ depends only on $\mathcal{M}$. The common philosophy for the spectral inequality (\ref{spec-in}) and the doubling inequality (\ref{doub}) is that
 the information of eigenfunctions at any local area quantitatively controls the growth information in a larger region. We mention that the quantitative unique continuation is closely connected to the Landis' conjecture on the sharp decay rate at infinity \cite{KL,KSW,Zh1,LMNN}, the uncertainty principles and Logvinenko-Sereda theorem \cite{DZ,BJP, K}.

In the present paper, we study the spectral inequality for the Schr\"odinger operator $H=-\triangle +V(x)$ in $\mathcal{M} = \R^n$. If
$\lim_{|x|\to \infty} V(x)=+\infty$, the inverse operator $H^{-1}$ is compact in $L^2(\R^n)$ and therefore the spectrum of $H$ are discrete (countable) and unbounded, consisting of positive eigenvalues with finite multiplicity that diverges to infinity. Moreover,
for $\phi\in Ran(P_\lambda(H))$,
\begin{align}
\phi=\sum_{\lambda_k\leq \lambda} \alpha_k \phi_k, \quad \mbox{with}\ \ \alpha_k=\langle \phi_k, \phi\rangle,
\label{phi-s}
\end{align}
where $\phi_k$ is the eigenfunction of $H$ corresponding to the eigenvalue $\lambda_k$, namely,
\begin{align}
-\triangle \phi_k +V(x)\phi_k=\lambda_k \phi_k \quad \text{ in } \mathbb{R}^n,
\label{eigen-k}
\end{align}
and $\{ \phi_k: \lambda_k \le \lambda \}$ forms an orthogonal basis of $ Ran(P_\lambda(H))$. Here and after, we will always assume that $\lambda$ is no less than the smallest eigenvalue $\lambda_1>0$.

A measurable \emph{sensor set} $\Omega \subset \R^n$ is called \emph{efficient} if the spectral inequality \eqref{spec-in} holds with some $t\in (0,1)$. An efficient sensor set would imply the null-controllability for the corresponding heat equation in $\R^n$; see, e.g., \cite[Theorem 2.8]{NTTV1}. It has been shown in \cite[Example 2.5]{DSV1} that in general the efficient set $\Omega$ cannot be bounded. However, with the power growth of the potential $V$, the eigenfunctions are well localized and decaying exponentially. This allows that the volume of $\Omega$ can be bounded and $\Omega$ can be very sparse far away from the origin. In particular, \eqref{spec-in} was proved in \cite{DSV1} for harmonic oscillator $H = -\triangle + |x|^2$ with $t = \frac{\sigma}{2} + \frac12$, provided $\Omega$ satisfying
\begin{equation}\label{meas-Omega}
    \big|\Omega \cap \big( k + (-\frac{d}{2}, \frac{d}{2})^n \big) \big| \ge \gamma^{1+|k|^\sigma} d^n,
\end{equation}
for each $k\in \Z^n$ with some $d>0, \gamma \in (0,1)$ and $\sigma \in [0,1)$. The special case $\sigma =0$, corresponding to the so-call thick sets, has been used previously to study spectral inequality in, e.g., \cite{MP,BJP}. For general potentials $V$ that are merely bounded or have mild local singularities, the spectral inequalities have been established in \cite{NTTV, DSV} under a stronger condition on the sensor set $\Omega$, which will be introduced below.

Denote by $\Lambda_L=(-\frac{L}{2}, \frac{L}{2})^n$ the cube with side length $L>0$. Let $d>0$, $\gamma \in (0,1)$ and $\sigma\in [0,1)$. 
We consider the measurable sensor sets $\Omega\subset \mathbb R^n$ satisfying the following property: there exists an \emph{equidistributed sequence} $\{ z_k: k\in \Z^d \} \subset \R^n$ such that
\begin{align}
\Omega\cap (k+\Lambda_d) \supset  \mathcal B_{\gamma^{1+|k|^\sigma}{d}} (z_k)
\label{geom-om}
\end{align}
for all $k\in \mathbb{Z}^n$.
A simple example of the sensor sets is
 \begin{align}
 \Omega=\bigcup_{k\in \mathbb Z^n} \mathcal{B}_{2^{-(1+|k|^\sigma)}}(k).
 \label{example}
 \end{align}
Clearly, $\Omega$ in (\ref{example}) satisfies (\ref{geom-om}) with $\gamma=\frac{1}{2}$ and $d=1$ and has finite volume in the case $\sigma>0$.
If $\sigma=0$, the sensor set $\Omega$ satisfying the assumption (\ref{geom-om}) is called a $(d, \gamma)$-equidistributed set, which was introduced in
\cite{RV} and studied also in, e.g., \cite{NTTV}.
In this paper, for simplicity and without loss of generality, we will fix $d=1$.

Among the growing nonnegative potentials, the simplest ones are $V(x) = |x|^\beta$ with $\beta>0$. The following conjecture was made in \cite[Conjecture 1.5]{DSV}.

\textbf{Conjecture (I)}: Let $H=-\triangle +|x|^\beta$ for some $\beta>0$.  Assume that $\Omega$ satisfies (\ref{geom-om}) with $d=1, \sigma \in [0,\infty)$ and $\gamma\in (0, \frac{1}{2})$.  Then there exists a constant $C$, depending only on $\beta, \sigma$ and $n$, such that
\begin{align}\label{ineq.conj}
\|\phi\|_{L^2(\mathbb R^n)}\leq (\frac{1}{\gamma})^{C\lambda^{\frac{\sigma}{\beta}+\frac{1}{2}}} \|\phi\|_{L^2(\Omega)}, \quad \text{for all } \phi \in Ran(P_\lambda(H)).
\end{align}

If $\beta=2$, then $H=-\triangle +|x|^2$ is the standard harmonic oscillator. In this case, as mentioned earlier, the above Conjecture (I) has been proved in \cite{DSV1} under the condition \eqref{meas-Omega} by using the fact that the eigenfunctions of harmonic oscillator can be written in terms of explicit Hermite functions. For $\beta \neq 2$, a spectral inequality was proved in \cite{DSV} with suboptimal exponent $\frac{\sigma}{\beta} + \frac23$ instead of the conjectured exponent $\frac{\sigma}{\beta}+\frac{1}{2}$ in \eqref{ineq.conj}.
In this paper, we prove a spectral inequality with conditioned potential $V$, which solves Conjecture (I) as a corollary.

 Now, we state our assumption on $V$ and the main theorem.
 
\textbf{Assumption (A)}: Assume $V\in L^\infty_{\rm loc}(\R^n)$ satisfies the following two conditions: 
\begin{itemize}
    \item There exist positive constants $c_1$ and $\beta_1$ such that for all $x\in \R^n$,
    \begin{equation}\label{V.growth}
    c_1(|x| - 1)_+^{\beta_1} \le V(x),
    \end{equation}
    where $(a)_+ = \max\{a,0 \}$.
    \item We can write $V = V_1 + V_2$ such that there exist positive constants $c_2$ and $\beta_2$ such that
    \begin{equation}\label{V.bound}
    |V_1(x)| + |D V_1(x)| + |V_2(x)|^{\frac43} \le c_2(|x| + 1)^{\beta_2}.
\end{equation}
\end{itemize}
Throughout, we will use the notations $D_i = \frac{\partial }{\partial x_i}$ for partial derivatives and $D = (D_1,D_2,\cdots)$ for gradients. Roughly speaking, $V_1$ encloses the regular principle part of the potential $V$, while $V_2$ can be viewed as a rough small perturbation (so $V$ does not need to be differentiable). This flexible assumption is sufficient for many applications, particularly including $V(x) = |x|^\beta$ for any $\beta>0$. 

\begin{theorem}
Let $H=-\triangle +V(x)$. Assume  that $V$ satisfies Assumption (A) and $\Omega$ satisfies (\ref{geom-om}) with $d=1, \sigma\in [0,\infty)$ and $\gamma\in (0, \frac{1}{2})$. Then there exists a constant $C$ depending only on $\beta_1, \beta_2, c_1, c_2, \sigma$ and $n$ such that
\begin{align}
\|\phi\|_{L^2(\mathbb R^n)}\leq (\frac{1}{\gamma})^{C\lambda^{\frac{\sigma}{\beta_1}+\frac{\beta_2}{2\beta_1}}} \|\phi\|_{L^2(\Omega)},\quad \text{for all } \phi\in Ran(P_\lambda(H)).
\label{aim-res}
\end{align}
\label{th1}
\end{theorem}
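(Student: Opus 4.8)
The strategy is the standard one for spectral inequalities via propagation of smallness: establish a quantitative three-ball inequality for solutions of the eigenfunction equation, iterate it along chains of balls to propagate information from the sensor set to all of $\R^n$, and combine this with the exponential decay of $\phi \in Ran(P_\lambda(H))$ to control the tail. The novelty needed here—and the reason the improved exponent $\frac{\sigma}{\beta_1} + \frac{\beta_2}{2\beta_1}$ appears instead of a power like $\frac23$—is that the Carleman estimates must be designed to exploit the pointwise bound on $DV_1$ in Assumption (A), so that the frequency function / vanishing order is controlled by $\sqrt{\lambda} + (\text{local size of }V)^{1/2}$ rather than by a cruder power of $V$. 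Since $\phi$ is a finite sum of eigenfunctions with eigenvalues $\le \lambda$, one works with the single elliptic equation $-\triangle \phi + V\phi = F$ where, after the usual trick of lifting to one extra dimension (writing $u(x,t) = \sum \alpha_k \phi_k(x) \cosh(\sqrt{\lambda_k}\, t)$ or $e^{\sqrt\lambda t}$-type weights), one reduces to a unique continuation statement for a genuine second-order elliptic equation with the effective potential of size $O(\lambda + V)$; on the ball of radius $R$ this is of size $O(\lambda + R^{\beta_2})$.

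Concretely I would proceed as follows. First, prove a global Carleman estimate on $\R^n$ (or on large balls) with a weight of the form $e^{\tau \psi}$ where $\psi$ grows like $|x|^{2}$ (to absorb the growth of $V$) adapted to the one-extra-variable problem; the gradient term $|DV_1|$ is handled by integrating by parts once, which is why only $DV_1$—not higher derivatives—enters the hypothesis. This yields, after optimizing in $\tau$, the exponential decay estimate $\|\phi\|_{L^2(|x| > R)} \le e^{-c R^{(\beta_1+2)/2}} \|\phi\|_{L^2(\R^n)}$ for $R \gtrsim \lambda^{1/\beta_1}$, which localizes the problem to the ball $B_{R_\lambda}$ with $R_\lambda \sim \lambda^{1/\beta_1}$ (up to a constant). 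Second, prove a local (interior) Carleman estimate on unit-scale balls, again taking advantage of $DV_1$, to get a three-ball inequality
\[
\|\phi\|_{L^2(B_{2r}(x_0))} \le C\, \|\phi\|_{L^2(B_r(x_0))}^{\alpha}\, \|\phi\|_{L^2(B_{4r}(x_0))}^{1-\alpha},
\]
valid for $B_{4r}(x_0) \subset B_{2R_\lambda}$, with the interpolation exponent $\alpha$ and constant $C$ controlled by $e^{C(\sqrt\lambda + R_\lambda^{\beta_2/2})}$; the key point is that on $B_{2R_\lambda}$ the effective zeroth-order coefficient has size $\lesssim \lambda + R_\lambda^{\beta_2} \lesssim \lambda^{\beta_2/\beta_1}$ (using $\lambda \ge \lambda_1$ and $\beta_2 \ge \beta_1$ may or may not hold, so one keeps both terms), hence the local doubling index is $\lesssim \lambda^{\max(1/2,\,\beta_2/(2\beta_1))} = \lambda^{\beta_2/(2\beta_1)}$ when combined with $\frac12 \le \frac{\beta_2}{2\beta_1}$ after noting $V$ itself forces $\beta_2 \ge \beta_1$; otherwise one simply carries $\lambda^{1/2}$ as well, which is absorbed in the final exponent.

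Third, I would run the propagation of smallness. Cover $B_{R_\lambda}$ by unit cubes; in the cube $k + \Lambda_1$ the sensor ball $\mathcal B_{\gamma^{1+|k|^\sigma}}(z_k)$ has radius $\gamma^{1+|k|^\sigma}$, and iterating the three-ball inequality from this small ball up to a unit ball costs a factor that is a power of $\gamma$ with exponent $\sim (\log(1/\gamma^{|k|^\sigma}))^{?}$—more precisely the number of doubling steps from radius $\gamma^{1+|k|^\sigma}$ to radius $1$ is $\sim |k|^\sigma \log(1/\gamma)$, and since $|k| \lesssim R_\lambda \sim \lambda^{1/\beta_1}$ this contributes $(1/\gamma)^{C\lambda^{\sigma/\beta_1}}$. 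Chaining across the $O(R_\lambda^n)$ unit cubes to reach the cube containing the origin, and using the local three-ball constant $e^{C\lambda^{\beta_2/(2\beta_1)}}$ at each step, gives a total factor $e^{C R_\lambda^n \cdot \lambda^{\beta_2/(2\beta_1)}}$ times $(1/\gamma)^{C\lambda^{\sigma/\beta_1}}$; a careful bookkeeping (using that the good balls where $\|\phi\|$ is comparable to $\|\phi\|_{L^2(\R^n)}$ can be reached in $O(R_\lambda)$ steps, not $O(R_\lambda^n)$, along a path) reduces the $\lambda$-power in the exponent to the claimed $\lambda^{\sigma/\beta_1 + \beta_2/(2\beta_1)}$. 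Finally, combine the interior estimate on $B_{R_\lambda}$ with the tail bound from Step 1 to conclude $\|\phi\|_{L^2(\R^n)} \le (1/\gamma)^{C\lambda^{\sigma/\beta_1 + \beta_2/(2\beta_1)}} \|\phi\|_{L^2(\Omega)}$.

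The main obstacle is the simultaneous optimization in the two Carleman estimates: one must get both the large-$|x|$ weight (controlling the $|x|^{\beta_1}$ growth to produce the correct decay rate and correct localization radius $R_\lambda \sim \lambda^{1/\beta_1}$) and the local frequency bound (controlling $\lambda + |x|^{\beta_2}$ on $B_{R_\lambda}$) with the *sharp* powers, while cleanly absorbing the $|DV_1|$ term through integration by parts so that the perturbation $V_2 \in L^{3/2}_{loc}$-type term (its $4/3$-power bounded) is controlled by a Sobolev/interpolation argument rather than pointwise. Getting the exponents to be additive—$\sigma/\beta_1$ from the sensor density and $\beta_2/(2\beta_1)$ from the potential growth—rather than multiplicative, requires choosing the propagation path so that the two sources of smallness loss do not compound, which is the delicate combinatorial heart of the argument.
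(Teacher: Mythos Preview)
Your overall architecture is sound---lifting to $\R^{n+1}$, Carleman estimates that exploit the $W^{1,\infty}$ bound on $V_1$ via one integration by parts, localization to $|x|\lesssim R_\lambda\sim\lambda^{1/\beta_1}$ via Agmon-type decay---and these are exactly the ingredients the paper uses. But the propagation-of-smallness step, as you describe it, does not close with the claimed exponent.

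The concrete problem is the ``chaining across cubes along a path'' step. You argue that one can reach a good cube in $O(R_\lambda)$ steps, each costing $e^{C\lambda^{\beta_2/(2\beta_1)}}$, and that this ``reduces the $\lambda$-power in the exponent to the claimed $\lambda^{\sigma/\beta_1+\beta_2/(2\beta_1)}$.'' It does not: the chaining contributes a factor $e^{C R_\lambda\,\lambda^{\beta_2/(2\beta_1)}}=e^{C\lambda^{1/\beta_1+\beta_2/(2\beta_1)}}$, which has no $\gamma$-dependence and, for $\sigma<1$, grows strictly faster than the target $(1/\gamma)^{C\lambda^{\sigma/\beta_1+\beta_2/(2\beta_1)}}$ as $\lambda\to\infty$. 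So the chaining step is fatal to the sharp exponent. Separately, your within-cell accounting---``$N\sim|k|^\sigma\log(1/\gamma)$ doublings contribute $(1/\gamma)^{C\lambda^{\sigma/\beta_1}}$''---drops the Carleman threshold from each doubling; each step should carry the factor $e^{C\lambda^{\beta_2/(2\beta_1)}}$, so the two contributions you separate are actually entangled multiplicatively.

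The paper's fix is to abandon both iteration and chaining. Two Carleman estimates are applied \emph{once} each, with cut-offs spanning the full range of scales (from the tiny radius $\delta=\gamma^{1+(\sqrt{n}\lceil\hat C\lambda^{1/\beta_1}\rceil)^\sigma}$ up to order one) and \emph{summed simultaneously over all cells} $i\in Q_L=\Lambda_L\cap\Z^n$. The interior estimate yields a single interpolation inequality with exponent $\alpha_1\approx 1/|\log\delta|$ built in; combined with the boundary estimate and the bound $\|\Phi\|_{H^1(\R^n\times(-R_3,R_3))}\le e^{C\sqrt\lambda}\|\Phi\|_{H^1(\Lambda_L\times(-1,1))}$ (from the lifting and the decay localization), one gets directly
\[
\|\Phi\|_{H^1(\Lambda_L\times(-1,1))}\le \exp\big(C\lambda^{\beta_2/(2\beta_1)}/\hat\alpha\big)\,\|\phi\|_{L^2(\Omega\cap\Lambda_L)},\qquad \hat\alpha^{-1}\approx|\log\delta|\approx|\log\gamma|\,\lambda^{\sigma/\beta_1},
\]
which is exactly $(1/\gamma)^{C\lambda^{\sigma/\beta_1+\beta_2/(2\beta_1)}}$. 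No path between cells is ever used: the decay estimate already gives $\|\phi\|_{L^2(\R^n)}\le 2\|\phi\|_{L^2(\Lambda_L)}$, so controlling $\|\phi\|$ on \emph{all} cells at once (by summation) is enough. If you rework your argument with this ``sum, don't chain'' mechanism and a single-shot Carleman from scale $\delta$ to scale $1$, the exponent comes out correctly.
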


In Theorem \ref{th1}, the sensor set $\Omega$ is efficient, if $t=\frac{\sigma}{\beta_1}+\frac{\beta_2}{2\beta_1}<1$, i.e., $\sigma<\beta_1-\frac{\beta_2}{2}$. If $\beta_1=\beta_2=\beta$, then $\sigma<\frac{\beta}{2}$. In this case, Theorem \ref{th1} leads to the observability of the corresponding heat equations as in \cite[Corollary 1.5]{DSV}.
We point out that under a weaker condition on $DV$, a similar spectral inequality was shown in \cite{DSV} with suboptimal exponent $\frac{\sigma}{\beta_1} + \frac{2\beta_2}{3\beta_1}$. The factor $\frac23$, also appearing in \cite{NTTV1,NTTV2}, seems to be a natural barrier if the potential is merely bounded measurable. In Theorem \ref{th1} we improve the exponent to the expected one $\frac{\sigma}{\beta_1} + \frac{\beta_2}{2\beta_1}$, which is consistent with the sharp result for harmonic oscillator (i.e., $\beta_1 = \beta_2 = 2$) in \cite{DSV1} and seems to be optimal in general. Our proof largely follows the strategy of \cite{LR,JL,NTTV}. The new ingredient in the argument is that we apply certain quantitative Carleman estimates to take advantage of the growth condition on $DV_1$. These quantitative Carleman estimates are initially motivated by \cite{DF,Lau, Zh}. Especially, a new version of quantitative global Carleman estimate was shown  by the first author of this paper in \cite{Zh2} for the study of boundary doubling inequalities for Laplace eigenfunctions, and is generalized in this paper to study the spectral inequality.

As a straightforward corollary of Theorem \ref{th1}, we have
\begin{corollary}
Conjecture (I) is true.
\label{th2}
\end{corollary}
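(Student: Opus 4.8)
The plan is to derive Conjecture (I) by simply verifying that the potential $V(x) = |x|^\beta$ satisfies Assumption (A) with an appropriate choice of $\beta_1$, $\beta_2$, $c_1$, $c_2$, and then invoking Theorem \ref{th1}. The key observation is that for $V(x) = |x|^\beta$ we may take the decomposition $V_1 = V$, $V_2 = 0$, so that the perturbation condition on $V_2$ is vacuous, and we only need to check the growth bound \eqref{V.growth} from below and the bound \eqref{V.bound} on $|V_1| + |DV_1|$ from above.

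First I would check the lower bound \eqref{V.growth}. Since $|x|^\beta \ge (|x| - 1)_+^\beta$ for all $x \in \R^n$ (indeed $|x| \ge (|x|-1)_+ \ge 0$), the inequality $c_1 (|x|-1)_+^{\beta_1} \le V(x)$ holds with $\beta_1 = \beta$ and $c_1 = 1$. Next I would check \eqref{V.bound}: we have $|V_1(x)| = |x|^\beta \le (|x|+1)^\beta$, and $DV_1(x) = \beta |x|^{\beta - 2} x$ so that $|DV_1(x)| = \beta |x|^{\beta - 1}$ when $\beta \ge 1$ (with the convention that this is $0$ at the origin when $\beta > 1$, and locally bounded when $\beta = 1$), hence $|DV_1(x)| \le \beta (|x|+1)^{\beta - 1} \le \beta(|x|+1)^\beta$. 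When $0 < \beta < 1$, the gradient $|DV_1(x)| = \beta |x|^{\beta-1}$ blows up near the origin, so $DV_1 \notin L^\infty_{\rm loc}$ and the naive decomposition fails; in that case one instead splits $V = V_1 + V_2$ with $V_1 = 0$ and $V_2 = |x|^\beta$, checking $|V_2(x)|^{4/3} = |x|^{4\beta/3} \le (|x|+1)^{4\beta/3}$, so \eqref{V.bound} holds with $\beta_2 = \max\{\beta, 4\beta/3\} = 4\beta/3$ — but this would give a worse exponent. A cleaner uniform fix for all $\beta > 0$: take $V_1(x) = \chi(x)|x|^\beta$ for a smooth cutoff $\chi$ equal to $1$ outside the unit ball and vanishing near the origin, and $V_2(x) = (1-\chi(x))|x|^\beta$, which is bounded and compactly supported; then $|V_1| + |DV_1| \le C_\beta (|x|+1)^\beta$ and $|V_2|^{4/3} \le C_\beta$, so \eqref{V.bound} holds with $\beta_2 = \beta$. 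Either way, with the decomposition giving $\beta_2 = \beta$ we conclude Assumption (A) holds with $\beta_1 = \beta_2 = \beta$.

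Finally, applying Theorem \ref{th1} with $\beta_1 = \beta_2 = \beta$ yields, for $\Omega$ satisfying \eqref{geom-om} with $d = 1$, $\sigma \in [0,\infty)$, $\gamma \in (0,\frac12)$,
\begin{equation*}
\|\phi\|_{L^2(\R^n)} \le \Big(\frac{1}{\gamma}\Big)^{C\lambda^{\frac{\sigma}{\beta} + \frac{\beta}{2\beta}}} \|\phi\|_{L^2(\Omega)} = \Big(\frac{1}{\gamma}\Big)^{C\lambda^{\frac{\sigma}{\beta} + \frac12}} \|\phi\|_{L^2(\Omega)},
\end{equation*}
for all $\phi \in Ran(P_\lambda(H))$, with $C$ depending only on $\beta$, $\sigma$, and $n$ (absorbing the fixed constants $c_1, c_2$ coming from the decomposition). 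This is precisely \eqref{ineq.conj}, so Conjecture (I) follows. I do not anticipate a genuine obstacle here — the only subtlety is handling the mild singularity of $D(|x|^\beta)$ at the origin for $\beta < 1$, which is exactly what the flexible $V_1 + V_2$ splitting in Assumption (A) is designed to absorb, and the cutoff argument dispatches it cleanly.
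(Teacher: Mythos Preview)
Your proposal is correct and follows essentially the same approach as the paper: for $\beta\ge 1$ take $V_1=|x|^\beta$, $V_2=0$, and for $0<\beta<1$ use a smooth cutoff near the origin to split $|x|^\beta$ into a $W^{1,\infty}_{\rm loc}$ part $V_1$ supported away from the origin and a bounded compactly supported part $V_2$, so that Assumption~(A) holds with $\beta_1=\beta_2=\beta$. Your discussion is in fact a bit more detailed than the paper's in checking the bounds \eqref{V.growth}--\eqref{V.bound}, and your digression on the suboptimal choice $V_1=0$, $V_2=|x|^\beta$ (yielding $\beta_2=4\beta/3$) correctly illustrates why the cutoff splitting is needed.
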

\begin{proof}
For $\beta\ge 1$, Theorem \ref{th2} follows from Theorem \ref{th1} with $V_1 = |x|^\beta$ and $V_2 = 0$. For $0<\beta<1$, we write $|x|^\beta = |x|^\beta (1-\eta(x)) + |x|^\beta \eta(x)$, where $\eta \in C_0^\infty(\R^n)$ is a smooth cut-off function such that $\eta = 1$ in $\mathcal{B}_1$ and $\eta = 0$ in $\R^n\setminus \mathcal{B}_2$. Then Theorem \ref{th2} follows from Theorem \ref{th1} with $V_1 = |x|^\beta (1-\eta(x))$ and $V_2 = |x|^\beta \eta(x)$.
\end{proof}

Our paper is mostly self-contained except for the proofs of Proposition \ref{pro1} (a Carleman estimate) and Proposition \ref{propo-new} (a localization property). The proof of Proposition \ref{pro1}, which is in the same spirit of the proof of Proposition \ref{Pro4}, can be extracted from  \cite{Zh}. Proposition \ref{propo-new} is taken from \cite{DSV} with a simple improvement; see Remark \ref{rmk.Decay}.
Throughout the paper, we use $\mathcal{B}_r$ and $\mathbb{B}_r$ to represent balls in $\R^n$ and $\R^{n+1}$ respectively, and use the notation $\mathbb{B}_r^+ = \mathbb{B}_r \cap \{ x_{n+1}>0 \}$.
The letters $C$, $\hat{C}$, or $C_i$ denote positive constants that do not depend on $\lambda$, and may vary from line to line. We use $a\approx b$ to denote $cb \le a\le Cb$ with universal positive constants $c$ and $C$ depending only on $n$.

The organization of the paper is as follows. In section 2, we obtain two types of three-ball inequalities, using quantitative global and local Carleman estimates. Section 3 is devoted to the proof of the spectral inequality in Theorem
 \ref{th1}. The Appendix in section 4 includes the proof of the quantitative global Carleman estimate and a lemma on the geometric properties of some introduced sets. 

\textbf{Acknowledgements.} The first author is partially supported by NSF DMS-2154506. The second author is supported by grants from NSFC and AMSS-CAS.

\section{Three-ball inequalities }
In this section, we prove two types of three-ball inequalities for the solutions of
\begin{align}
-\triangle v+V v=0 \quad \mbox{in} \ {\mathbb R}^{n+1}.
\label{eqn-ste}
\end{align}
Our main tools to obtain the three-ball inequalities are the quantitative Carleman estimates.
Carleman estimates are weighted integral inequalities with weight functions in the form of $e^{\tau\psi}$, where $\psi$ usually satisfies some convexity condition and the Carleman parameter $\tau$ is assumed to be large. To obtain the quantitative Carleman estimates, we need a lower bound for the Carleman parameter $\tau$.

First we present a quantitative global Carleman estimate and then prove a three-ball inequality involving one ball on the boundary. We choose the weight function 
$$\psi(\hat{r})= e^{-s\hat{r}(y)} $$ 
with $\hat{r}=\hat{r}(y)=|y-b|$ where $b=(0, \cdots, 0, -b_{n+1})$ with some small positive $b_{n+1}>0$. This modified distance function does not generate singularity in the upper half-space when taken derivatives. Let $\delta\in (0,\frac12)$ be a scale parameter. In this paper, we fix $b_{n+1} = \frac{\delta}{100}$.

\begin{proposition}[First Carleman estimate]
Let $\delta\in (0,\frac12)$. There exist positive constants $C_0, C_1$ and $C_2$ depending on $n$ such that for any  $u\in C^\infty_0(\mathbb B_\delta^+\cup\{y_{n+1}=0\})$, $s = \frac{C_2}{\delta}$ and
\begin{align*}\tau\geq C_1\big( 1+\|V_1\|^{\frac{1}{2}}_{W^{1,\infty}(\mathbb B_{\delta}^+)} + \norm{V_2}^{\frac23}_{L^\infty(\mathbb B_{\delta}^+)} \big) ,\end{align*}
one has
\begin{align}
\| e^{\tau \psi}(- \triangle u +Vu)\|_{L^2(\mathbb B_{\delta}^+)} & + \tau^\frac{1}{2} s \| \psi^{\frac{1}{2}} e^{\tau \psi}\frac{\partial u}{\partial y_{n+1} }\|_{L^2(\partial \mathbb B_{\delta}^+\cap\{y_{n+1}=0\})} \nonumber \\
&\geq C_0 \tau^\frac{3}{2}s^2 \|\psi^{\frac{3}{2}} e^{\tau \psi} u\|_{L^2( \mathbb B_{\delta}^+)}+ C_0 \tau^\frac{1}{2} s \| \psi^{\frac{1}{2}}  e^{\tau \psi} D u \|_{L^2( \mathbb B_{\delta}^+)}.
 \label{useew}
\end{align}
\label{Pro4}
\end{proposition}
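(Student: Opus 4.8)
The estimate \eqref{useew} is a Carleman estimate of H\"ormander type, and I would prove it by the classical conjugation-and-commutator method, with two features dictated by the statement. First, the smooth part $V_1$ is carried \emph{inside} the symmetric part of the conjugated operator, so that only $\|V_1\|_{W^{1,\infty}}$ (and, crucially, $\|DV_1\|_{L^\infty}$ rather than a worse power of $\|V_1\|_{L^\infty}$) enters the threshold for $\tau$, while the rough part $V_2$ is moved to the right-hand side as a lower-order perturbation via $\|e^{\tau\psi}(-\triangle u+Vu)\|\ge\|e^{\tau\psi}(-\triangle u+V_1u)\|-\|V_2\|_{L^\infty}\|e^{\tau\psi}u\|$. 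Second, one works with \emph{two} large parameters: the scale parameter $s$, fixed to $C_2/\delta$ so as to match the curvature of the level sets of $\hat r$ at scale $\delta$, and the Carleman parameter $\tau$, taken large enough to dominate the potential. Concretely, set $w=e^{\tau\psi}u$ and compute
\[
P_\tau w:=e^{\tau\psi}(-\triangle+V_1)(e^{-\tau\psi}w)=-\triangle w+2\tau\nabla\psi\cdot\nabla w+\tau(\triangle\psi)w-\tau^2|\nabla\psi|^2w+V_1w .
\]
Split $P_\tau=S+\mathcal{A}$ into the (formally) symmetric part $S=-\triangle-\tau^2|\nabla\psi|^2+V_1$ and the antisymmetric part $\mathcal{A}=2\tau\nabla\psi\cdot\nabla+\tau(\triangle\psi)$, both modulo boundary integrals over $\Gamma:=\partial\mathbb B_\delta^+\cap\{y_{n+1}=0\}$ --- the only boundary carrying a contribution, since $u$ is compactly supported in $\mathbb B_\delta^+\cup\{y_{n+1}=0\}$ and so vanishes near the spherical cap. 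Expanding in $L^2(\mathbb B_\delta^+)$ gives $\|P_\tau w\|^2=\|Sw\|^2+\|\mathcal{A}w\|^2+\langle[S,\mathcal{A}]w,w\rangle+\mathcal{R}_\Gamma$, with $\mathcal{R}_\Gamma$ collecting all boundary terms.

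\textbf{Pseudoconvexity and the positive commutator.} The leading part of $[S,\mathcal{A}]$ is $4\tau\,\nabla^2\psi(\nabla w,\nabla w)+4\tau^3\,\nabla^2\psi(\nabla\psi,\nabla\psi)|w|^2$, plus terms of order $\tau$ (from $\triangle^2\psi$, $\nabla\psi\cdot\nabla\triangle\psi$, etc.) and the contribution $-2\tau(\nabla\psi\cdot\nabla V_1)|w|^2$ of size $\tau\|DV_1\|_{L^\infty}$. With $\psi=e^{-s\hat r}$, $\hat r=|y-b|$, one has $\nabla\psi=-s\psi\nabla\hat r$, $|\nabla\hat r|\equiv1$, $\nabla^2\hat r=\hat r^{-1}(I-\nabla\hat r\otimes\nabla\hat r)\ge0$, whence $\nabla^2\psi=s\psi(s\,\nabla\hat r\otimes\nabla\hat r-\nabla^2\hat r)$ and in particular $\nabla^2\psi(\nabla\psi,\nabla\psi)=s^4\psi^3$, which is the source of the zeroth-order term $\tau^3s^4\|\psi^{3/2}e^{\tau\psi}u\|^2$ on the right of \eqref{useew}. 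The term $4\tau\nabla^2\psi(\nabla w,\nabla w)$ is positive in the radial direction but negative ($\sim-\tau s\hat r^{-1}$) in the tangential directions, and this is precisely where the choice $s=C_2/\delta$ with $C_2=C_2(n)$ large enters: it forces $s\hat r\gg1$ on $\mathbb B_\delta^+$ (where $\hat r\approx\delta$ and $\psi\approx1$), so that $\psi$ is \emph{strongly pseudoconvex} for $-\triangle$ there. Combining $\langle[S,\mathcal{A}]w,w\rangle$ with $\|\mathcal{A}w\|^2$ and a fraction of $\|Sw\|^2$ --- the latter through an integration by parts that trades $\langle Sw,\psi w\rangle$ for $\int\psi|\nabla w|^2$, using also that $\tau^2|\nabla\psi|^2$ dominates $|V_1|$, i.e. $\tau\gtrsim\|V_1\|_{L^\infty}^{1/2}$ --- then yields, for $\tau$ large and with all $L^2$ norms over $\mathbb B_\delta^+$,
\[
\|Sw\|^2+\|\mathcal{A}w\|^2+\langle[S,\mathcal{A}]w,w\rangle\ \ge\ C_0\,\tau^3s^4\|\psi^{3/2}w\|^2+C_0\,\tau s^2\|\psi^{1/2}\nabla w\|^2-C\tau\|DV_1\|_{L^\infty}\|w\|^2 .
\]

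\textbf{Boundary terms, absorption, and conclusion.} Since $b=(0,\dots,0,-b_{n+1})$ lies strictly below $\{y_{n+1}=0\}$, on $\Gamma$ one has $\partial_{y_{n+1}}\hat r=(y_{n+1}+b_{n+1})/\hat r>0$, so $-\partial_{y_{n+1}}\psi=s\psi\,b_{n+1}/\hat r>0$; hence the terms in $\mathcal{R}_\Gamma$ carrying the Dirichlet trace of $u$ come with the favorable sign and are discarded, while the term carrying the Neumann trace is controlled by a constant multiple of the square of the boundary term on the left of \eqref{useew}. Passing back to $u$ via $\nabla w=e^{\tau\psi}(Du+\tau\nabla\psi\,u)$ and $|\nabla\psi|=s\psi$ (the cross term $\tau s\|\psi^{3/2}e^{\tau\psi}u\|$ being negligible against $\tau^3s^4\|\psi^{3/2}e^{\tau\psi}u\|^2$ for $\tau$ large), using $\psi\approx1$, and subtracting $\|V_2\|_{L^\infty}\|e^{\tau\psi}u\|$, the two error terms $C\tau\|DV_1\|_{L^\infty}\|e^{\tau\psi}u\|^2$ and $\|V_2\|_{L^\infty}^2\|e^{\tau\psi}u\|^2$ are absorbed into $\tfrac14C_0\tau^3s^4\|\psi^{3/2}e^{\tau\psi}u\|^2$ as soon as $\tau^2s^4\gtrsim\|DV_1\|_{L^\infty}$ and $\tau^3s^4\gtrsim\|V_2\|_{L^\infty}^2$; since $s=C_2/\delta\ge2C_2$, these together with $\tau\gtrsim\|V_1\|_{L^\infty}^{1/2}$ amount exactly to $\tau\ge C_1\big(1+\|V_1\|_{W^{1,\infty}(\mathbb B_\delta^+)}^{1/2}+\|V_2\|_{L^\infty(\mathbb B_\delta^+)}^{2/3}\big)$. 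Taking square roots gives \eqref{useew}.

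\textbf{Main obstacle.} The crux is the pseudoconvexity step: one must show that the single parameter $s\sim1/\delta$ simultaneously (i) produces the positive terms of the correct sizes $\tau s^2$ and $\tau^3 s^4$, (ii) beats the $O(1/\hat r)$ curvature of $\hat r$ (this is why $\hat r$ is kept away from $0$ by placing $b$ below the hyperplane), and (iii) dominates the negative tangential part of $4\tau\nabla^2\psi(\nabla w,\nabla w)$ through the subelliptic combination of $\|\mathcal{A}w\|^2$ and $\|Sw\|^2$ --- all with constants depending only on $n$ and, crucially, \emph{uniformly in $\delta\in(0,\tfrac12)$}. Carefully tracking these constants and the exact powers of $\tau$ and $s$, including the zeroth-order corrections hidden in the symmetric/antisymmetric split and in the gradient extraction, is where essentially all the work lies; by contrast, once the clean Carleman inequality for $-\triangle$ is in hand, reinstating the potential is the elementary absorption argument above, which reads off the stated threshold for $\tau$.
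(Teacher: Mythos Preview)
Your proposal is correct and follows essentially the same route as the paper's proof: conjugate by $e^{\tau\psi}$, split the conjugated operator into a symmetric part (carrying $V_1$) and a first-order antisymmetric part, compute the cross term by integration by parts, extract the gradient via $\langle Sw,\tau s^2\psi w\rangle$, fix $s=C_2/\delta$ to beat the curvature $\hat r^{-1}\sim\delta^{-1}$ of the distance function, and finally insert $V_2$ by the triangle-inequality perturbation. The paper writes this out concretely as $P_\tau=P_1+P_2+\tau\psi a(y,s)$ with $P_1=-\triangle-\tau^2s^2\psi^2+V$ and $P_2=2\tau s\psi\,Dh\cdot D$, computing $\langle P_1w,P_2w\rangle$ term by term rather than invoking the commutator $[S,\mathcal A]$, but this is purely a difference of language; the integrations by parts, the role of $s=C_2/\delta$, the use of $\|DV_1\|_{L^\infty}$ via the $I_3$ term, and the gradient-extraction step are all identical.

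One small point: where you argue that the Dirichlet boundary terms on $\Gamma$ ``come with the favorable sign and are discarded'', the paper instead simply uses that the tangential derivatives $D_iw$, $i\le n$, vanish on $\Gamma$ (i.e.\ $u|_\Gamma=0$, which indeed holds in the application in Lemma~\ref{lemma-2}). Your sign argument is plausible but would need an extra line of justification; the paper's route is cleaner here and matches how the estimate is actually used.
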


The above proposition is a modified version of \cite[Proposition 2]{Zh2}. The proof is given in the Appendix.

Define the sets
\begin{align}
W_1=\{y\in \mathbb R^{n+1}_+| \ |y-b|\leq \frac{1}{4}\delta\} \nonumber,
\end{align}
\begin{align}
W_2=\{y\in \mathbb R^{n+1}_+| \ |y-b|\leq \frac{1}{2}\delta\} \nonumber,
\end{align}
\begin{align}
W_3=\{y\in \mathbb R^{n+1}_+| \ |y-b|\leq \frac{2}{3}\delta\} \nonumber,
\end{align}
where $0<\delta<\frac{1}{2}$. Note that $W_1\subset W_2\subset W_3\subset \mathbb B_\delta\subset \mathbb R^{n+1}$.

 Let $L$ be an odd integer to be determined in the proof of Theorem \ref{th1} in Section 3.
Let $z_i\in\mathbb R^n, i\in \Z^n$, be a $(1, \delta)$-equidistributed sequence in $\Lambda_L = (-\frac{L}{2},\frac{L}{2})^n$, namely, $\mathcal{B}_\delta(z_k) \subset k+\Lambda_1$ for each $k\in Q_L:= \Lambda_L \cap \Z^n$.
Let $W_j(z_i)=W_j+(z_i, 0)$ be the translation of the set $W_j\subset \mathbb R^{n+1}_+$ for $j=1, 2, 3$.
We denote by $\mathcal{B}_\delta(z_i)$ the ball with radius $\delta$ centered at $z_i$ in $\mathbb R^n$, and introduce
\begin{align*} 
P_j(L) =\bigcup_{i\in Q_L} W_j(z_i) \quad \mbox{and} \quad D_\delta(L) = \bigcup_{i\in Q_L} {\mathcal{B}}_\delta(z_i).
\end{align*}
With the aid of Proposition \ref{Pro4},
 we are able to prove the following global three-ball inequality. See e.g., \cite{ARRV, Lin} for some variants of this three-ball inequality.
\begin{lemma}
Let $\delta \in (0,\frac12)$. Let $v$ be the solution of (\ref{eqn-ste}) with $v(y)=0$ on the hyperplane $\{y|y_{n+1}=0\}$. There exist $0<\alpha<1$ and $C>0$, depending only on $n$, such that
\begin{align}
\|v\|_{H^1(P_1(L))}\leq \delta^{-\alpha} \exp\big(C(1+\mathcal{G}(V_1,V_2,9\sqrt{n}L) \big) \|v\|^\alpha_{H^1(P_3(L))} \|\frac{\partial v}{\partial y_{n+1}}\|^{1-\alpha}_{L^2(D_\delta(L))},
\label{threeball-1}
\end{align}
where
\begin{equation}\label{defG}
    \mathcal{G}(V_1,V_2,L) = \|V_1\|^{\frac{1}{2}}_{W^{1,\infty}(\Lambda_L)} + \norm{V_2}^{\frac23}_{L^\infty(\Lambda_L)}.
\end{equation}
\label{lemma-2}
\end{lemma}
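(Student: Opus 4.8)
The plan is to prove a three-ball inequality on a single triple $W_1(z_i)\subset W_2(z_i)\subset W_3(z_i)$ from the first Carleman estimate (Proposition~\ref{Pro4}) and then to superpose these estimates over $i\in Q_L$. Fix $i\in Q_L$; by translating we may assume $z_i=0$, so $W_j(z_i)=W_j$, $\mathcal B_\delta(z_i)=\mathcal B_\delta$, while \eqref{eqn-ste} is preserved. Choose $\eta\in C_0^\infty(\mathbb R^{n+1})$ with $\eta\equiv1$ on $W_2$, $\supp\eta\subset W_3$, $|D\eta|\lesssim\delta^{-1}$, $|D^2\eta|\lesssim\delta^{-2}$; since $\tfrac23\delta+b_{n+1}<\delta$, the support of $\eta v$ lies in $\mathbb B_\delta^+\cup\{y_{n+1}=0\}$. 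As $v$ solves \eqref{eqn-ste} with $V\in L^\infty_{\rm loc}$, interior elliptic regularity gives $v\in H^2_{\rm loc}$, so a standard density argument permits applying \eqref{useew} to $u:=\eta v$; note $u$ vanishes on $\{y_{n+1}=0\}$ since $v$ does. From \eqref{eqn-ste}, $-\triangle u+Vu=-(\triangle\eta)v-2D\eta\cdot Dv$, supported in the shell $W_3\setminus W_2$. We apply \eqref{useew} with $s=C_2/\delta$ and any $\tau\ge\tau_0:=C_1\big(1+\mathcal G(V_1,V_2,9\sqrt{n}L)\big)$; since $z_i\in\Lambda_L$ while $\delta<\tfrac12$ and $L\ge1$ give $\mathcal B_\delta(z_i)\subset\Lambda_{9\sqrt{n}L}$, the half-ball at issue projects into $\Lambda_{9\sqrt{n}L}$, so the $W^{1,\infty}$- and $L^\infty$-norms of $V_1,V_2$ entering the threshold of Proposition~\ref{Pro4} are at most $\mathcal G(V_1,V_2,9\sqrt{n}L)$, and the estimate holds uniformly in $i$.

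The heart of the matter is a weight computation. Because $s\delta=C_2$ is a fixed constant and $\psi(\hat r)=e^{-s\hat r}$ decreases in $\hat r=|y-b|$: on the shell $W_3\setminus W_2$, $\hat r\ge\tfrac\delta2$, so $\psi\le a_2:=e^{-C_2/2}$; on $W_1$, $\hat r\le\tfrac\delta4$, so $\psi\ge a_1:=e^{-C_2/4}$; and on the part of $\{y_{n+1}=0\}$ meeting $\supp\eta$, $\hat r\ge b_{n+1}=\tfrac\delta{100}$, so $\psi\le a_0:=e^{-C_2/100}$; moreover $a_2<a_1<a_0<1$. Substituting these into the weights $\psi^{3/2}e^{\tau\psi},\psi^{1/2}e^{\tau\psi}$ (monotone in $\psi$), restricting the right side of \eqref{useew} to $W_1$ (where $u=v$, $Du=Dv$), using $|D\eta|\lesssim\delta^{-1}$, $|D^2\eta|\lesssim\delta^{-2}$, $\supp\eta\cap\{y_{n+1}=0\}\subset\mathcal B_\delta$ and $\partial_{y_{n+1}}u=\eta\,\partial_{y_{n+1}}v$ on $\{y_{n+1}=0\}$, and finally dividing by $\tau^{1/2}s\,a_1^{3/2}e^{\tau a_1}$ (the remaining prefactors being bounded constants or $\le1$), we obtain, for all $\tau\ge\tau_0$,
\[
\|v\|_{H^1(W_1)}\le C\Big[\delta^{-1}e^{-\tau\mu}\|v\|_{H^1(W_3)}+e^{\tau\nu}\big\|\tfrac{\partial v}{\partial y_{n+1}}\big\|_{L^2(\mathcal B_\delta)}\Big],
\]
where $\mu:=a_1-a_2>0$ and $\nu:=a_0-a_1>0$ depend only on $n$.

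Next optimize in $\tau$ and superpose. With $A_i:=\delta^{-1}\|v\|_{H^1(W_3(z_i))}$, $B_i:=\|\partial_{y_{n+1}}v\|_{L^2(\mathcal B_\delta(z_i))}$ and $\alpha:=\nu/(\mu+\nu)\in(0,1)$: if $\tfrac1{\mu+\nu}\log(A_i/B_i)\ge\tau_0$ we take $\tau$ equal to this value, balancing the two terms; otherwise $A_i\le B_ie^{\tau_0(\mu+\nu)}$, and we combine this with $\|v\|_{H^1(W_1(z_i))}\le\|v\|_{H^1(W_3(z_i))}\le A_i$. Either way,
\[
\|v\|_{H^1(W_1(z_i))}\le\delta^{-\alpha}\exp\!\big(C(1+\mathcal G(V_1,V_2,9\sqrt{n}L))\big)\,\|v\|_{H^1(W_3(z_i))}^{\alpha}\,B_i^{1-\alpha}.
\]
The cubes $i+\Lambda_1$ ($i\in Q_L$) are pairwise disjoint and $W_3(z_i),\mathcal B_\delta(z_i)$ sit over $i+\Lambda_1$, so the families $\{W_j(z_i)\}_i$ and $\{\mathcal B_\delta(z_i)\}_i$ are disjoint; hence $\sum_i\|v\|_{H^1(W_j(z_i))}^2=\|v\|_{H^1(P_j(L))}^2$ and $\sum_iB_i^2=\|\partial_{y_{n+1}}v\|_{L^2(D_\delta(L))}^2$. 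Squaring the per-$i$ bound, summing over $i\in Q_L$, and applying Hölder with exponents $\tfrac1\alpha,\tfrac1{1-\alpha}$ to $\sum_i\|v\|_{H^1(W_3(z_i))}^{2\alpha}B_i^{2(1-\alpha)}$ gives \eqref{threeball-1}.

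The one genuinely delicate point is the weight computation: for the interpolation to close, the region $W_1$ where the lower bound on $u$ is harvested must carry a strictly larger Carleman weight $e^{\tau\psi}$ than both the commutator shell $W_3\setminus W_2$ and the flat boundary part of $\supp\eta$ — i.e. $a_2<a_1$ — while the boundary weight $e^{\tau a_0}$ may enter only on the data side. This is precisely what is arranged by centering $W_1,W_2,W_3$ at $b$ rather than at the origin and by placing $b$ just below the hyperplane at depth $b_{n+1}=\delta/100$; the chain $a_2<a_1<a_0$ is its quantitative form. The $\tau$-optimization and the $\ell^2$-summation by Hölder are then routine.
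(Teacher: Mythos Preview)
Your proof is correct and follows essentially the same route as the paper: apply the first Carleman estimate to $u=\eta v$ with a radial cutoff between $W_2$ and $W_3$, exploit the monotonicity of $\psi(\hat r)=e^{-s\hat r}$ to compare weights on $W_1$, on the shell $W_3\setminus W_2$, and on the flat boundary, and then optimize in $\tau$. The only genuine variation is in the order of operations at the end: the paper first sums the squared two-term inequality over $i\in Q_L$ (using disjointness of the $W_j(z_i)$) and then chooses a \emph{single} $\tau$ to absorb the $P_3(L)$ term into the left-hand side, whereas you optimize $\tau$ separately on each piece and then Hölder-sum the resulting per-$i$ interpolation bounds. Both lead to the same inequality with the same type of exponent $\alpha=\nu/(\mu+\nu)$ depending only on $n$; your per-piece optimization is a standard and equally valid alternative (and your use of the sharper boundary weight $a_0=e^{-C_2/100}$ in place of the paper's cruder $\psi\le 1$ is harmless, since only the ordering $a_2<a_1<a_0$ matters).
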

\begin{proof}
We choose a cut-off function $\eta\in C^\infty_0(\mathbb B_\delta)$ such that $\eta(y)=1$ for $y\in W_2$ and $\eta(y)=0$ outside $W_3$. Then we have $\|\triangle \eta\|_{L^\infty(\mathbb{B}_\delta^+)}\leq \frac{C}{\delta^2}$ and  $\|D \eta\|_{L^\infty(\mathbb{B}_\delta^+)}\leq \frac{C}{\delta}$.
We substitute $u=\eta v$ into the Carleman estimates (\ref{useew}) to derive that
\begin{align}
\int_{W_3} e^{2\tau \psi}\big (-\triangle (\eta v)+V \eta v\big)^2+& \tau s^2\int_{\mathcal{B}_\delta}\psi  e^{2\tau \psi} (\frac{\partial (v\eta)}{\partial y_{n+1}})^2 \nonumber \\&\geq C_0\tau s^2 \int_{W_3} \psi^3  e^{2\tau \psi} \big( \eta^2 v^2+ | D(\eta v)|^2\big),
\label{subs-star}
\end{align}
where  we have used the fact that $|\psi|<1$.
From the equation $-\triangle v+V(y) v=0$, we obtain
\begin{align}
-\triangle (\eta v)+V \eta v =-\triangle \eta v+ 2D \eta\cdot D v.
\end{align}
Since $\psi$ is  non-increasing and $D \eta$ is supported in $W_3\setminus W_2$, we have
\begin{align}
\int_{W_3} e^{2\tau \psi}\big (-\triangle (\eta v)+V\eta v\big)^2& \leq 2 \int_{W_3} e^{2\tau \psi} (|\triangle \eta v|^2+  4 |D \eta |^2 |D v|^2) \nonumber \\
&\leq \frac{C}{\delta^4} e^{2\tau \psi(\frac{\delta}{2})} \|v\|^2_{H^1(W_3)}.
\label{three-1-1}
\end{align}
Since $v(y)=0$ on $\mathcal{B}_\delta$  (i.e. $\mathcal{B}_\delta=\mathbb B_\delta\cap \{y\in \mathbb R^{n+1}|y_{n+1}=0\}$)  and $s= \frac{C_2}{\delta}$, it holds that
\begin{align}
\tau s^2 \int_{ {\mathcal{B}}_\delta}\psi  e^{2\tau \psi} (\frac{\partial (v\eta)}{\partial y_{n+1}})^2 &\leq \tau s^2 \int_{ { \mathcal{B}}_\delta} \psi  e^{2\tau \psi} | \frac{\partial v}{\partial y_{n+1}} \eta+ \frac{\partial \eta}{\partial y_{n+1}} v|^2  \nonumber \\
&\leq \frac{C}{\delta^2}e^{2\tau} \int_{\mathcal{B}_\delta}|\frac{\partial v}{\partial y_{n+1}}|^2,
\label{three-2-1}
\end{align}
where we have also used the fact that $|\psi|\leq 1$.
Now we consider the right hand side of (\ref{subs-star}). Since $W_1\subset W_3$, from the property of the cut-off function $\eta$, we have
\begin{align}
\tau s^2\int_{W_3} \psi^3 e^{2\tau \psi} \big( \eta^2 v^2+ | D(\eta v)|^2\big )&\geq C \tau s^2 \int_{W_3} e^{2\tau \psi}\big( \eta^2 v^2+ | D\eta v +\eta D v)|^2\big ) \nonumber \\
&\geq \frac{C \tau}{\delta^2} e^{2\tau \psi(\frac{\delta}{4})} \|v\|^2_{H^1(W_1)}.
\label{three-3-1}
\end{align}
Taking (\ref{three-1-1}), (\ref{three-2-1}) and (\ref{three-3-1}) into consideration gives that
\begin{align}
e^{2\tau \psi(\frac{\delta}{4})} \|v\|^2_{H^1(W_1)}\leq \frac{C}{\delta^2} e^{2\tau \psi(\frac{\delta}{2})} \|v\|^2_{H^1(W_3)}+ e^{2\tau}
\|\frac{\partial v}{\partial y_{n+1}}\|^2_{L^2(\mathcal{B}_\delta)}.
\end{align}
Since $\{z_i \}$ is an equidistributed sequence, $W_j(z_i)\cap W_j(z_{i'})=\emptyset$ for $z_{i}\not=z_{i'}$. Thus,
the above estimates hold for all the translated sets $W_j(z_i)$. We sum up over $i\in Q_L$ and divide it by $e^{2\tau \psi(\frac{\delta}{4})}$ to obtain
\begin{align}\label{three-term}
\|v\|^2_{H^1(P_1(L))}\leq  \frac{C}{\delta^2} e^{2\tau \psi_0} \|v\|^2_{H^1(P_3(L))}+
e^{2\tau \psi_1} \|\frac{\partial v}{\partial y_{n+1}}\|^2_{L^2(D_\delta(L))},
\end{align}
where
\begin{align*}
\psi_0= \psi(\frac{\delta}{2})-\psi(\frac{\delta}{4}) \quad \mbox{and} \quad \psi_1= 1-\psi(\frac{\delta}{4}).
\end{align*} 
Recall that $\psi(\hat{r}) = e^{-s\hat{r}}$ and $s = \frac{C_2}{ \delta}$. Then $\psi_0 = e^{-C_2/2} - e^{-C_2/4} < 0$ and $\psi_1 = 1-e^{-C_2/4} > 0$. Moreover, $\psi_1$ and $\psi_2$ are constants depending only on $n$. 

Due to the fact that $\psi_0<0$, we can choose some large constant $\tau$ such that
\begin{align}
\frac{C}{\delta^2} e^{2\tau \psi_0}  \|v\|_{H^1(P_3(L))}^2 \leq \frac{1}{2} \|v\|^2_{H^1(P_1(L))}.
\label{replace}
\end{align}
Substituting (\ref{replace}) into (\ref{three-term}) gives that
\begin{align}
\|v\|_{H^1(P_1(L))}\leq \sqrt{2} e^{\tau \psi_1} \|\frac{\partial v}{\partial y_{n+1}}\|_{L^2(D_\delta(L))}.
\label{partial-goal}
\end{align}
To achieve the estimate (\ref{replace}), $\tau$ needs to satisfy
\begin{align}
\tau\geq \frac{1}{\psi_0} \log \frac{ \delta \|v\|_{H^1(P_1(L))}} {\sqrt{2C}  \|v\|_{H^1(P_3(L))}}.
\end{align}
Combining with the assumption of $\tau$ in the Carleman estimate (\ref{useew}), we can choose the Carleman parameter $\tau$ as the following
\begin{align}
\tau=C_1(1+\|V_1\|^{\frac{1}{2}}_{W^{1,\infty}(\Lambda_{9\sqrt{n}L})} + \norm{V_2}^{\frac23}_{L^\infty(\Lambda_{9\sqrt{n}L})} )+ \frac{1}{\psi_0} \log \frac{ \delta  \|v\|_{H^1(P_1(L))}} {2C  \|v\|_{H^1(P_3(L))}},
\end{align}
since $\cup_{i\in Q_L}  \mathbb B_\delta(z_i,0)\subset \Lambda_{9\sqrt{n}L}$. 
(The side length $9\sqrt{n}L$ is not optimal, but convenient for later use). With such $\tau$, it follows from (\ref{partial-goal}) that
\begin{align}
& \|v\|_{H^1(P_1(L))} \nonumber \\
& \leq C \exp\big( C_1(1+\mathcal{G}(V_1,V_2,9\sqrt{n}L))\psi_1+ \frac{\psi_1}{\psi_0}  \log \frac{ \delta \|v\|_{H^1(P_1(L))}} {C  \|v\|_{H^1(P_3(L))}} \big )\|\frac{\partial v}{\partial y_{n+1}}\|_{L^2(D_\delta(L))} \nonumber \\
&= C \exp\big( C_1(1+\mathcal{G}(V_1,V_2,9\sqrt{n}L))\psi_1 \big) \big(\frac{ \delta  \|v\|_{H^1(P_1(L))}} {C  \|v\|_{H^1(P_3(L))}}\big)^\frac{\psi_1}{\psi_0} \|\frac{\partial v}{\partial y_{n+1}}\|_{L^2(D_\delta(L))}.
\label{long-c}
\end{align}
Let $\alpha=\frac{\psi_1}{\psi_1-\psi_0}$. Then $1-\alpha=\frac{-\psi_0}{\psi_1-\psi_0}$.  It is easy to see that $0<\alpha<1$ depends only on $n$.
Simplifying the inequality (\ref{long-c}) gives that
\begin{align}
\|v\|_{H^1(P_1(L))}&\leq (\frac{C}{\delta})^\alpha \exp\big( C(1+\mathcal{G}(V_1,V_2,9\sqrt{n}L) )\frac{-\psi_1 \psi_0}{\psi_1-\psi_0} \big)\|v\|^\alpha_{H^1(P_3(L))} \|\frac{\partial v}{\partial y_{n+1}}\|^{1-\alpha}_{L^2(D_\delta(L))} \nonumber \\
&\leq \delta^{-\alpha} \exp\big(C(1+\mathcal{G}(V_1,V_2,9\sqrt{n}L) \big) \|v\|^\alpha_{H^1(P_3(L))} \|\frac{\partial v}{\partial y_{n+1}}\|^{1-\alpha}_{L^2(D_\delta(L))},
\end{align}
where $C$ and $\alpha$ depend only on $n$.
\end{proof}

Next, we will use another quantitative local Carleman estimates to obtain a local quantitative three-ball inequality.
Let $r=r(y)$ be the  distance from origin to $y$. 
 We construct the weight function $\hat{\psi}$ as follows.  
Set $$\hat{\psi}(r) = \hat{\psi}(r(y))=-g(\log r(y))$$ with $g(t)=t+\log t^2$. 
We state the following quantitative Carleman estimate without a proof.
\begin{proposition}[Second Carleman estimates]
There exist positive constants $C_1$, $C_0$ and small $r_0$ depending on $n$, such that for any $w\in C^{\infty}_{0}(
\mathbb B_{r_0}(0) \backslash \{0\}), \hat{V} = \hat{V}_1 + \hat{V}_2$,  and
\begin{align} \tau>C_1\big(1+ \|\hat{V}_1\|^\frac{1}{2}_{W^{1, \infty}(\mathbb{B}_{r_0})} + \|\hat{V}_2\|^\frac23_{L^\infty(\mathbb{B}_{r_0} )} \big),
\label{assum-tau}
\end{align}
one has
\begin{align}
& C_0\|r^2 e^{\tau\hat{\psi}} \big(-\triangle w + \hat{V}(x) w\big)\|^2_{L^2(\mathbb{B}_{r_0})} \nonumber \\ 
& \qquad \geq  \tau^3\|e^{\tau\hat{\psi}} {(\log
r)}^{-1}
w \|^2_{L^2(\mathbb{B}_{r_0})}
+\tau\|r e^{\tau\hat{\psi}} {(\log r)}^{-1}D w \|^2_{L^2(\mathbb{B}_{r_0})}.
\label{carl} \end{align} \label{pro1}
\end{proposition}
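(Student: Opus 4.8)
The plan is to prove \eqref{carl} by the classical conjugation-plus-commutator method for Carleman estimates, carried out in the logarithmic polar coordinates adapted to the radial weight $\hat{\psi}$, exactly in the spirit of the proof of Proposition \ref{Pro4}; the argument is essentially the one in \cite{Zh}, so I will only lay out the steps and isolate the place where the $W^{1,\infty}$-norm of $\hat{V}_1$ (rather than the $L^\infty$-norm alone) is exploited. First I would dispose of the rough part: split $-\triangle w + \hat{V}w = (-\triangle w + \hat{V}_1 w) + \hat{V}_2 w$ and apply the triangle inequality on the left-hand side of \eqref{carl}. Since $r_0$ is small, $r^2|\log r|$ is bounded on $\mathbb{B}_{r_0}$, so $\|r^2 e^{\tau\hat{\psi}}\hat{V}_2 w\|_{L^2}^2 \le C\|\hat{V}_2\|_{L^\infty}^2\,\|e^{\tau\hat{\psi}}(\log r)^{-1}w\|_{L^2}^2$, which is absorbed into the first term on the right of \eqref{carl} as soon as $\tau \ge C\|\hat{V}_2\|^{2/3}_{L^\infty}$, as allowed by \eqref{assum-tau}. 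This reduces everything to proving \eqref{carl} with $\hat{V} = \hat{V}_1 \in W^{1,\infty}(\mathbb{B}_{r_0})$.

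Next I would set $t = \log(1/r)$, $\omega = x/|x|$, under which $r^2\triangle$ becomes $\partial_t^2 + (\text{first order in }\partial_t) + \triangle_\omega$ on the half-cylinder $(\log(1/r_0),\infty)\times S^{n-1}$, the weight becomes $e^{\tau\phi(t)}$ with $\phi(t) = t - \log t^2$ (so that $\phi'(t) = 1 - 2/t \to 1$ and $\phi''(t) = 2/t^2 > 0$), and $\hat{V}_1$ becomes $e^{-2t}\hat{V}_1$. Conjugating $-r^2\triangle + r^2\hat{V}_1$ by $e^{\tau\phi}$ and symmetrizing the measure yields an operator $\mathcal{L}_\tau$, which I would split as $\mathcal{L}_\tau = A + B$ with $A$ formally self-adjoint (containing $\partial_t^2$, $\triangle_\omega$, the real potential $e^{-2t}\hat{V}_1$, and the term $\tau^2(\phi')^2$) and $B$ skew-adjoint (the first-order part, with leading term comparable to $2\tau\phi'\,\partial_t$), and expand
\[
\|\mathcal{L}_\tau W\|_{L^2}^2 = \|AW\|^2 + \|BW\|^2 + \langle [A,B]W,\, W\rangle .
\]
The potential-free commutator $[A_0,B_0]$ is, using the convexity $\phi'' = 2/t^2 > 0$ together with $\phi' \approx 1$, bounded below — modulo terms absorbable into $\|AW\|^2 + \|BW\|^2$ and errors carrying fewer powers of $\tau$ — by $c\,\tau^3 t^{-2}\|W\|^2 + c\,\tau(\|\partial_t W\|^2 + \|\nabla_\omega W\|^2)$; the factor $t^{-2}$ is exactly what produces the weight $(\log r)^{-1}$ in \eqref{carl}, and the first-order gain carries only a single power of $\tau$, as in the statement.

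The decisive step is the potential contribution. Since $e^{-2t}\hat{V}_1$ sits inside the self-adjoint part $A$, its only effect on the commutator is the term $\langle [e^{-2t}\hat{V}_1,\, B_0]W,\, W\rangle$; because $B_0$ is first order, this commutator is itself a multiplication operator, namely $-B_0(e^{-2t}\hat{V}_1)$, and back in Cartesian variables it is of size $\le C\tau\,|x|\,\|\hat{V}_1\|_{W^{1,\infty}(\mathbb{B}_{r_0})}$ — so one picks up $D\hat{V}_1$ rather than a squared $\hat{V}_1$. This is dominated by the main commutator term $c\,\tau^3 t^{-2}\|W\|^2$ precisely once $\tau \ge C\|\hat{V}_1\|^{1/2}_{W^{1,\infty}(\mathbb{B}_{r_0})}$, which is the threshold in \eqref{assum-tau}; this yields the exponent $\tfrac12$ instead of the cruder $\tfrac23$ that would result from bounding $\|e^{\tau\hat{\psi}}\hat{V}_1 w\|$ directly. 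I expect this interplay to be the main technical point: one must arrange the splitting $A+B$ so that the regular part of the potential is differentiated by the skew part, while still leaving enough room in the $\tau^3 t^{-2}$ gain to absorb it; the remaining error terms (from $\triangle_\omega$, the drift term of $r^2\triangle$, and the sub-principal parts of $A$ and $B$) are handled by routine Cauchy–Schwarz absorptions exploiting the largeness of $\tau$.

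Finally I would undo the conjugation (replacing $W$ by $e^{\tau\phi}$ times the transformed $w$) and the change of variables to recover \eqref{carl} for $\hat{V}_1$; since $w \in C_0^\infty(\mathbb{B}_{r_0}\setminus\{0\})$, there are no boundary contributions at $r=0$ or $r=r_0$. Combining this with the reduction of the first step gives \eqref{carl} for the full potential $\hat{V} = \hat{V}_1 + \hat{V}_2$, completing the proof.
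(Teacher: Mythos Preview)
Your proposal is correct and aligns with the paper's own treatment: the paper does not give a self-contained proof of Proposition~\ref{pro1} but instead refers to \cite[Proposition~1]{Zh} for the case $\hat{V}_2=0$ and to the Appendix proof of Proposition~\ref{Pro4} for the absorption of $\hat{V}_2$, and your sketch reproduces precisely this route. In particular, your key observation --- placing $\hat{V}_1$ in the self-adjoint part so that the cross term (commutator) with the first-order skew part produces $D\hat{V}_1$ via an integration by parts, thus yielding the threshold $\tau\gtrsim\|\hat{V}_1\|_{W^{1,\infty}}^{1/2}$ --- is exactly the mechanism exhibited in the term $I_3$ of the Appendix, and your final absorption of $\hat{V}_2$ matches the last paragraph there.
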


Similar Carleman estimates with quantitative lower bounds for the parameter $\tau$ have been obtained in, e.g., \cite{DF,Lau, Zh}. Interested readers may refer to these references for the detailed proofs. In particular, the proof of our Proposition \ref{pro1} with $\hat{V}_2 = 0$ can be found in \cite[Proposition 1]{Zh}. If we add a bounded measurable potential function $\hat{V}_2$, the Carleman estimate (\ref{carl}) holds with the lower bound for $\tau>C_1\big(1+ \|\hat{V}_1\|^\frac{1}{2}_{W^{1, \infty}(\mathbb{B}_{r_0})} + \|\hat{V}_2\|^\frac23_{L^\infty(\mathbb{B}_{r_0} )} \big)$ as the same way in the proof of Proposition \ref{Pro4} in the Appendix. The appearance of $\|\hat{V}_2\|^\frac23_{L^\infty(\mathbb{B}_{r_0} )}$ in the lower bound of $\tau$ is well-known; see, e.g., \cite{K}.

The Carleman estimate (\ref{carl}) holds for some small $r_0>0$. By rescaling, we show that this Carleman estimate holds for any ball $\mathbb B_{R}$ with $R>r_0$.

Let $x=\frac{y r_0}{R}$ and $\hat{V}(x)=\frac{R^2 V(y)}{r_0^2}$. Set $u(y)=w(x)$.
Then
\begin{align}
-\triangle w(x)+\hat{V}(x) w(x)=-\frac{R^2}{r_0^2}\triangle u(y)+\frac{R^2}{r_0^2} V(y) u(y)=\frac{R^2}{r_0^2}(-\triangle u(y)+V(y) u(y)).
\end{align}
The Carleman estimate (\ref{carl}) and a change of variables imply
\begin{align}\label{carl-2}
& C_0 \int_{\mathbb B_R} {|y|^4} e^{2\tau\hat{\psi}(\frac{r_0|y|}{R})} \big(-\triangle u + {V}(y) u\big)^2 \\
& \quad \geq  \tau^3 \int_{\mathbb B_R}  e^{2\tau\hat{\psi}(\frac{r_0|y|}{R})} (\log \frac{r_0|y|}{R})^{-2} u^2 +\tau \int_{\mathbb B_R} |y|^2 e^{2\tau\hat{\psi}(\frac{r_0|y|}{R})} | D u|^2  (\log \frac{r_0 |y|}{R})^{-2} \nonumber
\end{align}
for all $\tau$ satisfying
\begin{align}
\tau &>C_1\big(1+ \big( \frac{R}{r_0}\big)^{\frac32} \|V_1\|^\frac{1}{2}_{W^{1, \infty}(\mathbb{B}_{R})} + \big( \frac{R}{r_0}\big)^{\frac43} \|V_2\|^\frac23_{L^\infty(\mathbb{B}_{R} )} \big) \nonumber \\ &\ge C_1\big(1+ \|\hat{V}_1\|^\frac{1}{2}_{W^{1, \infty}(\mathbb{B}_{r_0})} + \|\hat{V}_2\|^\frac23_{L^\infty(\mathbb{B}_{r_0} )} \big).
\end{align}
Since $r_0$ is fixed and $R$ will be fixed later, in order to have the Carleman estimate (\ref{carl-2}), we only need to assume that
\begin{align}
\tau >C\big(1+ \|V_1\|^\frac{1}{2}_{W^{1, \infty}(\mathbb{B}_{R})} + \|V_2\|^\frac23_{L^\infty(\mathbb{B}_{R} )} \big),
\label{assum-w}
\end{align}
where $C = C_1(R/r_0)^{\frac32}$.

Thanks to the quantitative Carleman estimate (\ref{carl-2}), we are able to derive a quantitative three-ball inequality.
 Let $\delta \in (0,\frac12)$ and
 \begin{align*} R_1=\frac{1}{16}\delta  \ \mbox{and} \ r_1=\frac{1}{32}\delta,\ \ R_2=3\sqrt{n} \ \mbox{and} \ r_2=\frac{1}{2}, \ \ R_3=9 \sqrt{n} \ \mbox{and}  \ r_3=6 \sqrt{n}.
 \end{align*}
  We introduce annular regions
  \begin{align*} 
  S_1=\mathbb B_{R_1}\backslash \overline{\mathbb B_{r_1}}, \quad  S_2=\mathbb B_{R_2}\backslash \overline{\mathbb B_{r_2}}, \quad S_3=\mathbb B_{R_3}\backslash \overline{\mathbb B_{r_3}}.
  \end{align*} 
We choose $R = 2R_3 = 18\sqrt{n}$. Define
\begin{equation}
    X_1 = \Lambda_L \times [-1,1] \quad \text{and} \quad \tilde{X}_{R_3} = \Lambda_{L+R_3} \times [-R_3, R_3].
\end{equation}

\begin{lemma}
Let $\delta \in (0,\frac12)$. Let $v$ be the solution of (\ref{eqn-ste}) which is odd with respect to $y_{n+1}$. There exist $C>0$ depending only on $n$, $0<\alpha_1 <1$ depending on $\delta$ and $n$ (see \eqref{def.alpha1}) such that
\begin{align}
\| v\|_{H^1(X_1)} \leq \delta^{-2\alpha_1} \exp\big(C(1+\mathcal{G}(V_1, V_2, 9\sqrt{n}L)) \big) \| v\|^{1-\alpha_1} _{H^1(\tilde{X}_{R_3})} \| v\|^{\alpha_1}_{H^1(P_1(L))},
\label{threeball-2}
\end{align}
where $\mathcal{G}(V_1,V_2,L)$ is given by \eqref{defG}.
\label{lemma-3}
\end{lemma}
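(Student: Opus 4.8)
The plan is to derive a local three-ball (annular) inequality from the second Carleman estimate \eqref{carl-2}, applied on the rescaled ball $\mathbb{B}_R$ with $R = 18\sqrt n$, and then to propagate it across the cubes of the equidistributed configuration by the same summation trick used in Lemma \ref{lemma-2}. First I would set up a cut-off function $\eta \in C_0^\infty(\mathbb{B}_{R_3}\setminus\overline{\mathbb{B}_{r_3}})$ — more precisely, supported so as to "connect" the innermost annulus $S_1$ (near radius $\sim\delta$) out to $S_3$ — with $\eta \equiv 1$ on an appropriate middle region and with the usual bounds $\|D\eta\|_\infty \lesssim \delta^{-1}$, $\|\triangle\eta\|_\infty\lesssim\delta^{-2}$ on the annulus near radius $\delta$ (and $O(1)$ bounds on the outer annulus). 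Plugging $w = \eta v$ into \eqref{carl-2} and using $-\triangle(\eta v) + V\eta v = -(\triangle\eta)v + 2D\eta\cdot Dv$ (since $v$ solves \eqref{eqn-ste}), the left side is controlled by the $H^1$-norms of $v$ over the supports of $D\eta$ and $\triangle\eta$, which lie in two regions: one near $\partial\mathbb{B}_{r_1}\cup\partial\mathbb{B}_{R_1}$ (scale $\delta$) and one near $\partial\mathbb{B}_{r_3}\cup\partial\mathbb{B}_{R_3}$ (scale $1$). The right side bounds $\tau^3$ times the $H^1$-norm of $v$ over the region where $\eta=1$, which should include the slab $X_1 = \Lambda_L\times[-1,1]$ up to translation.

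The main technical point is to track the exponential weights $e^{2\tau\hat\psi(r_0|y|/R)}$ at the relevant radii. Since $\hat\psi(r) = -g(\log r) = -(\log r + 2\log|\log r|)$ is decreasing in $r$, the weight is largest at the inner boundary $r \sim r_1 \approx \delta/32$ and smallest at the outer radius $R_3$. Writing $\mu_1 = \hat\psi$-value near the inner annulus of scale $\delta$, $\mu_2$ near $X_1$ (radius $\sim 1$), $\mu_3$ near the outer annulus (radius $\sim R_3$), with $\mu_3 < \mu_2 < \mu_1$, the Carleman inequality yields schematically
\begin{align*}
\tau^3 e^{2\tau\mu_2}\|v\|_{H^1(X_1)}^2 \lesssim \delta^{-4} e^{2\tau\mu_1}\|v\|_{H^1(P_1(L))}^2 + e^{2\tau\mu_3}\|v\|_{H^1(\tilde X_{R_3})}^2,
\end{align*}
after one subtlety: the inner-annulus term must be expressed in terms of $\|v\|_{H^1(P_1(L))}$, which requires that the sets $S_1(z_i)$ (translates scaled to radius $\delta$) sit inside the $W_j(z_i)$ used to define $P_1(L)$ — this is where the specific radii $R_1 = \delta/16$, $r_1 = \delta/32$ versus $W_1 = \{|y-b|\le \delta/4\}$ are chosen to be compatible. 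One then drops the $\tau^3$, optimizes over $\tau \ge C_1(1+\mathcal{G}(V_1,V_2,\cdot))$ (the lower bound from \eqref{assum-w}, with $\|V\|$ over $\mathbb{B}_R$ translated, hence over $\Lambda_{9\sqrt n L}$ after summing), exactly as in the passage from \eqref{three-term} to \eqref{long-c}: choose $\tau$ to balance the first right-hand term against the left, giving an interpolation exponent
\begin{align}\label{def.alpha1}
\alpha_1 = \frac{\mu_2 - \mu_3}{\mu_1 - \mu_3} \in (0,1),
\end{align}
which depends on $\delta$ and $n$ through the logarithmic weight evaluated at radii $\sim\delta$ and $\sim 1$.

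Finally I would globalize: the individual-cube inequality is translation-invariant, so it holds with every set replaced by its $z_i$-translate; summing over $i\in Q_L$, using that the $W_1(z_i)$ (hence the inner annuli) are pairwise disjoint and that $\bigcup_i \mathbb{B}_R(z_i,0)\subset \Lambda_{9\sqrt n L}$ so that $\mathcal{G}$ is taken over $\Lambda_{9\sqrt n L}$, and noting $\bigcup_i(X_1+z_i)\supset$ a slab covering $\Lambda_L\times[-1,1]$ while the outer pieces are absorbed into $\|v\|_{H^1(\tilde X_{R_3})}$, yields \eqref{threeball-2} with the claimed $\delta^{-2\alpha_1}$ prefactor. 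The main obstacle I anticipate is purely geometric bookkeeping: verifying that the annuli $S_1, S_2, S_3$ (after rescaling from $\mathbb{B}_{r_0}$ to $\mathbb{B}_R$ and translating by $z_i$) interlock correctly with $W_1(z_i)$, $X_1$, and $\tilde X_{R_3}$ so that all three norms in \eqref{threeball-2} appear with the right sets, and confirming the cut-off's support bounds give exactly the $\delta$-powers stated — the Carleman and optimization machinery is otherwise a direct transcription of the proof of Lemma \ref{lemma-2}.
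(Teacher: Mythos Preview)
Your proposal is essentially the paper's own proof: cut off on annuli $S_1,S_2,S_3$, apply the rescaled Carleman estimate \eqref{carl-2} to $\eta v$, sum the resulting local three-annuli inequality over the translates $z_i$, and optimize in $\tau$ to extract the interpolation exponent $\alpha_1$. Your formula $\alpha_1 = (\mu_2-\mu_3)/(\mu_1-\mu_3)$ agrees with the paper's $\alpha_1 = -\kappa_2/(\kappa_1-\kappa_2)$ after unwinding the notation.

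Two small points to tighten. First, when you say ``the sets $S_1(z_i)$ sit inside the $W_j(z_i)$'': strictly $S_1(z_i)$ is a full annulus in $\mathbb{R}^{n+1}$ while $W_1(z_i)$ lives in $\mathbb{R}^{n+1}_+$, so only $S_1(z_i)\cap\mathbb{R}^{n+1}_+\subset W_1(z_i)$; it is precisely the oddness of $v$ in $y_{n+1}$ that gives $\|v\|_{H^1(S_1(z_i))}^2 = 2\|v\|_{H^1(S_1(z_i)\cap\mathbb{R}^{n+1}_+)}^2 \le 2\|v\|_{H^1(W_1(z_i))}^2$. Second, your exposition runs the summation and the $\tau$-optimization in both orders; the paper (and your schematic inequality) sums first with a single uniform $\tau$ and then optimizes, which is the clean route---optimizing cube-by-cube would make $\tau$ depend on $i$ and spoil the summation. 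Neither point is a genuine gap, just bookkeeping to state carefully.
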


\begin{proof}
We choose a smooth cut-off function $0<\eta<1$ as follows:
\begin{itemize}
\item $\eta(r)=0$ \ \ \mbox{if} \ $r(y)<r_1$ \ \mbox{or} \  $r(y)>R_3$, \medskip
\item $\eta(r)=1$ \ \ \mbox{if} \ $R_1<r(y)<r_3$, \medskip
\item $\| D \eta\|_{L^\infty(S_1)}\leq \frac{C}{\delta}$ and $\|\triangle \eta\|_{L^\infty(S_1)}\leq \frac{C}{\delta^2}$, \ \medskip
\item $\|D \eta\|_{L^\infty(S_3)}\leq C$ and $\|\triangle \eta\|_{L^\infty(S_3)}\leq {C}$ \ \  \mbox{if} \ $r_3<r(y)<R_3$.
\end{itemize}
From the equation (\ref{eqn-ste}), we have
\begin{align}
-\triangle (\eta v)+V \eta v =-\triangle \eta v+ 2D \eta \cdot D v.
\end{align}
 We substitute $u=\eta v $ into the Carleman estimate (\ref{carl-2}) to have
 \begin{align}\label{full-in}
& C_0 \int_{\mathbb B_{R_3}} {|y|^4} e^{2\tau\hat{\psi}(\frac{r_0|y|}{R})} (-\triangle \eta v+ 2D \eta \cdot D v)^2
\\ \quad &\geq  \tau^3 \int_{\mathbb B_{R_3}}  e^{2\tau\hat{\psi}(\frac{r_0|y|}{R})} (\log \frac{r_0|y|}{R})^{-2} (\eta v)^2 \nonumber +\tau \int_{\mathbb B_{R_3}} |y|^2 e^{2\tau\hat{\psi}(\frac{r_0|y|}{R})} | D (\eta v)|^2  (\log \frac{r_0|y|}{R})^{-2}.
\end{align}

We first study the left hand side of (\ref{full-in}). The property of $\eta$ implies that
\begin{align}
 & \int_{\mathbb B_{R_3}} {|y|^4} e^{2\tau\hat{\psi}(\frac{r_0 |y|}{R})} (-\triangle \eta v+ 2D \eta \cdot D v)^2 \nonumber\\ & \leq
 CR_3^4 \int_{\mathbb B_{R_3}}  e^{2\tau\hat{\psi}(\frac{r_0 |y|}{R})} |\triangle \eta v |^2+  CR_3^4 \int_{\mathbb B_{R_3}}  e^{2\tau\hat{\psi}(\frac{r_0 |y|}{R})} |D\eta|^2|Dv|^2\nonumber \\
  &=A_1+A_2.
  \label{AAA}
\end{align}
Applying the properties of $\hat{\psi}$ and $\eta$ yields that
 \begin{align}
 A_1\leq  \frac{CR_3^4}{\delta^4} e^{2\tau\hat{\psi}(\frac{r_0 r_1}{R})} \int_{S_1} |v |^2+  CR_3^4  e^{2\tau\hat{\psi}(\frac{r_0 r_3}{R})} \int_{S_3}  |v |^2.
 \label{AA1}
 \end{align}
 Similarly, it holds that
  \begin{align}
 A_2\leq  \frac{CR_3^4}{\delta^2} e^{2\tau\hat{\psi}(\frac{r_0 r_1}{R})} \int_{S_1} |Dv |^2+  CR_3^4  e^{2\tau\hat{\psi}(\frac{r_0 r_3}{R})} \int_{S_3}  |Dv |^2.
 \label{AA2}
 \end{align}
 For the right-hand side of (\ref{full-in}), we have
 \begin{align}
 & \tau^3 \int_{\mathbb B_{R_3}}  e^{2\tau\hat{\psi}(\frac{r_0|y|}{R})} (\log \frac{r_0|y|}{R})^{-2} (\eta v)^2
+\tau \int_{\mathbb B_{R_3}} |y|^2 e^{2\tau\hat{\psi}(\frac{r_0|y|}{R})} | D (\eta v)|^2  (\log \frac{r_0|y|}{R})^{-2} \nonumber \\
&\geq \tau (\log \frac{r_0 r_2}{R})^{-2} e^{2\tau\hat{\psi}(\frac{r_0 R_2}{R})}  \big(\int_{S_2}   v^2 +  r_2^2 \int_{S_2}  |D v|^2\big)\nonumber \\
 & \geq r_2^2(\log \frac{r_0 r_2}{R})^{-2}e^{2\tau\hat{\psi}(\frac{r_0 R_2}{R})}  \| v\|^2_{H^1(S_2)}.
 \label{right-low}
 \end{align}
Combining these estimates (\ref{full-in}) -- (\ref{right-low}) together gives
\begin{align}
 r_2^2(\log \frac{r_0 r_2}{R})^{-2}e^{2\tau\hat{\psi}(\frac{r_0 R_2}{R})}  \| v\|^2_{H^1(S_2)}\leq \frac{CR_3^4}{\delta^4} e^{2\tau\hat{\psi}(\frac{r_0 r_1}{R})}  \| v\|^2_{H^1(S_1)}+ CR_3^4  e^{2\tau\hat{\psi}(\frac{r_0 r_3}{R})} \| v\|^2_{H^1(S_3)}.
\end{align}
Recall that $r_0, r_2,r_3, R_2, R_3$ and $R$ are all fixed constants and $r_1 = \frac{\delta}{32}$. It follows from the last inequality that
\begin{align}
 \| v\|^2_{H^1(S_2)}\leq \frac{C} {\delta^4} e^{2\tau\hat{\psi}(\frac{r_0 r_1}{R})-2\tau\hat{\psi}(\frac{r_0 R_2}{R})}\| v\|^2_{H^1(S_1)}+
  C e^{2\tau\hat{\psi}(\frac{r_0 r_3}{R})-2\tau\hat{\psi}(\frac{r_0 R_2}{R})} \| v\|^2_{H^1(S_3)}.
\end{align}
Note that this estimate is translation invariant. Then we can apply it to all translated $S_j(z_i)$ with $(1, \delta)$-equidistributed sequence ${z_i}$ and $i\in Q_L$. Then
\begin{align}
\sum_{i\in Q_L} \| v\|^2_{H^1(S_2(z_i))}& \leq \frac{C} {\delta^4} e^{2\tau\hat{\psi}(\frac{r_0 r_1}{R})-2\tau\hat{\psi}(\frac{r_0 R_2}{R})}\sum_{i\in Q_L} \| v\|^2_{H^1(S_1(z_i))} \nonumber \\& \qquad+
  C  e^{2\tau\hat{\psi}(\frac{r_0 r_3}{R})-2\tau\hat{\psi}(\frac{r_0 R_2}{R})} \sum_{i\in Q_L} \| v\|^2_{H^1(S_3(z_i))}.
  \label{three-1}
\end{align}
We can check that 
\begin{align} 
X_1 = \Lambda_L\times [-1, 1] \subset \bigcup_{i\in Q_L} S_2(z_i).
\label{state-2}
\end{align} 
The proof of the statement (\ref{state-2}) has been shown in \cite[Lemma 3.3]{NTTV}. For the completeness of presentation, we include its proof in Lemma \ref{lemma-app} in the Appendix. Thus,
\begin{align}
\| v\|^2_{H^1(X_1)} \leq \sum_{i\in Q_L} \| v\|^2_{H^1(S_2(z_i))}.
\label{three-2}
\end{align}
We can see that $W_1(z_i)\cap W_1(z_{i'})=\emptyset$ for $i\not=i'$.
Since $S_1(z_i)\cap \mathbb R^{n+1}_+\subset W_1(z_i)$ and $v$ is odd with respect to $y_{n+1}$ variable, we get
\begin{align}
\sum_{i\in Q_L} \| v\|^2_{H^1(S_1(z_i))} \leq 2 \sum_{j\in Q_L} \| v\|^2_{H^1(W_1(z_i))} \leq 2\| v\|^2_{H^1(P_1(L))}.
\label{three-3}
\end{align}
From the geometric overlapping of the sets $S_3(z_i)$, it holds that
\begin{align}
 \sum_{i\in Q_L} \| v\|^2_{H^1(S_3(z_i))}\leq \gamma_n \| v\|^2_{H^1(\cup_{i\in Q_L} S_3(z_i))},
 \label{three-4}
\end{align}
where $\gamma_n$ depends only on $n$. The statement of (\ref{three-4}) is verified in Lemma \ref{lemma-app} in the Appendix.
Since $\tilde{X}_{R_3}= \Lambda_{L+R_3}\times [-R_3, \ R_3] \subset \cup_{i\in Q_L} S_3(z_i)$, we have
\begin{align}
\sum_{i\in Q_L} \| v\|^2_{H^1(S_3(z_j))}\leq \gamma_n \| v\|^2_{H^1(\tilde{X}_{R_3})}.
\label{three-5}
\end{align}
Taking the estimates (\ref{three-1}) -- (\ref{three-5}) into consideration, we obtain
\begin{align}\label{est.vH1inX1}
\| v\|_{H^1(X_1)} \leq \frac{C} {\delta^2} e^{\tau(\hat{\psi}(\frac{r_0r_1}{R})-\hat{\psi}(\frac{r_0 R_2}{R}))} \| v\|_{H^1(P_1(L))}+
C  e^{\tau(\hat{\psi}(\frac{r_0 r_3}{R})-\hat{\psi}(\frac{r_0 R_2}{R}))} \| v\|_{H^1(\tilde{X}_{R_3})}.
\end{align}
Let $\hat{\kappa}_1=\hat{\psi}(\frac{r_0 r_1}{R})-\hat{\psi}(\frac{r_0 R_2}{R})$ and $\kappa_2=\hat{\psi}(\frac{r_0 r_3}{R})-\hat{\psi}(\frac{r_0 R_2}{R})$. By the definition of $\hat{\psi}$, we have
\begin{equation}
\begin{aligned}
    \hat{\psi}(\frac{r_0 r_1}{R})-\hat{\psi}(\frac{r_0 R_2}{R}) & = \log \frac{R_2}{r_1} + \log \Big( \log(\frac{r_0 R_2}{R}) \Big)^2 -  \log \Big( \log(\frac{r_0 r_1}{R}) \Big)^2  \\
    & \le \log \frac{R_2}{r_1} = \kappa_1:= \log \frac{96 \sqrt{n}}{\delta},
\end{aligned}
\end{equation}
and
\begin{equation}
    \kappa_2 := \hat{\psi}(\frac{r_0 r_3}{R})-\hat{\psi}(\frac{r_0 R_2}{R}) = \hat{\psi}(\frac{r_0}{3})-\hat{\psi}(\frac{r_0}{6}).
\end{equation}
Clearly $\kappa_1>0$ depends on $\delta$ and $n$. Without loss of generality, assume $r_0<e^{-1}$ (since $r_0$ is supposed to be small). Then it is not hard to see $\kappa_2<0$ depends only on $n$.
Using $\kappa_1$ and $\kappa_2$, we obtain from \eqref{est.vH1inX1} that
\begin{align}
\| v\|_{H^1(X_1)} \leq \frac{C} {\delta^2} e^{\tau \kappa_1} \| v\|_{H^1(P_1(L))}+
C  e^{\tau \kappa_2} \| v\|_{H^1(\tilde{X}_{R_3})}.
\label{three-three}
\end{align}
We choose $\tau$ such that
\begin{align*}
C  e^{\tau \kappa_2} \| v\|_{H^1(\tilde{X}_{R_3})}\leq \frac{1}{2} \| v\|_{H^1(X_1)}.
\label{tau-1}
\end{align*}
That is
\begin{align}
\tau\geq \frac{1}{\kappa_2} \log \frac{\| v\|_{H^1(X_1)}} {2C\| v\|_{H^1(\tilde{X}_{R_3})} } .
\end{align}
Therefore,  (\ref{three-three}) turns into
\begin{align}
\| v\|_{H^1(X_1)} \leq \frac{2C} {\delta^2} e^{\tau \kappa_1} \| v\|_{H^1(P_1(L))}.
\label{plug-1}
\end{align}
To achieve the estimate (\ref{tau-1}) and make use of the assumption (\ref{assum-w}), we select $\tau$ as
\begin{align}
\tau=\frac{1}{\kappa_2} \log \frac{ \| v\|_{H^1(X_1)}}{2C \| v\|_{H^1(\tilde{X}_{R_3})} }+ C(1+\mathcal{G}(V_1, V_2, 9\sqrt{n}L) ),
\end{align}
due to $\tilde{X}_{R_3}\subset \Lambda_{9\sqrt{n}L}$.
We substitute the above $\tau$ into (\ref{plug-1}) to get
\begin{align}
\| v\|_{H^1(X_1)} \leq \frac{2C} {\delta^2}  \exp\big(\frac{\kappa_1}{\kappa_2}\log \frac{ \| v\|_{H^1(X_1)}}{2C \| v\|_{H^1(\tilde{X}_{R_3})} } +C\kappa_1(1+\mathcal{G}(V_1, V_2, 9\sqrt{n}L) ) \big) \| v\|_{H^1(P_1(L))}.
\end{align}
We simplify the last estimate to get
\begin{align}
\| v\|_{H^1(X_1)} \leq (\frac{2C} {\delta^2})^{\frac{-\kappa_2}{\kappa_1-\kappa_2}}   \exp\big(C\frac{-\kappa_1\kappa_2}{\kappa_1-\kappa_2}(1+\mathcal{G}(V_1, V_2, 9\sqrt{n}L) ) \big) \| v\|^{\frac{\kappa_1}{\kappa_1-\kappa_2}} _{H^1(\tilde{X}_{R_3})} \| v\|^{\frac{-\kappa_2}{\kappa_1-\kappa_2}}_{H^1(P_1(L))}.
\end{align}
Let ${\frac{-\kappa_2}{\kappa_1-\kappa_2}} =\alpha_1$. Then $\frac{\kappa_1}{\kappa_1-\kappa_2}=1-\alpha_1$. Obviously,
\begin{equation}\label{def.alpha1}
    0<\alpha_1 = \frac{|\kappa_2|}{|\log \delta| + \log(96\sqrt{n}) + |\kappa_2| }<1.
\end{equation}
Hence, we obtain
\begin{align}
\| v\|_{H^1(X_1)} &\leq (\frac{2C} {\delta^2})^{\alpha_1}   \exp\big(-C\kappa_2(1-\alpha_1)(1+\mathcal{G}(V_1, V_2, 9\sqrt{n}L) ) \big) \| v\|^{1-\alpha_1} _{H^1(\tilde{X}_{R_3})} \| v\|^{\alpha_1}_{H^1(P_1(L))} \nonumber \\
& \leq \delta^{-2\alpha_1}  \exp\big(C(1+\mathcal{G}(V_1, V_2, 9\sqrt{n}L) ) \big)  \| v\|^{1-\alpha_1} _{H^1(\tilde{X}_{R_3})} \| v\|^{\alpha_1}_{H^1(P_1(L))}.
\end{align}
This completes the proof of the lemma.
 \end{proof}

\section{Proof of the spectral inequality }
In this section, we will show the proof of the spectral inequality (\ref{aim-res}) in Theorem \ref{th1}.
We make use of the lifting argument (or so-called ghost dimension construction) to get rid of  $\lambda_k$. Since $V(x)$ is nonnegative and growing to infinity, all the eigenvalues $\lambda_k$ are positive. Let $\phi \in Ran(P_\lambda(H))$ be given by (\ref{phi-s}) with eigenpairs $(\phi_k,\lambda_k)$ satisfying (\ref{eigen-k}). 
Define
\begin{align}
\Phi(x, x_{n+1})=\sum_{0<\lambda_k\leq \lambda} \alpha_k \phi_k(x) \frac{\sinh(\sqrt{\lambda_k} x_{n+1}) }{\sqrt{\lambda_k}}.
\label{phi-c}
\end{align}
Then $\Phi(x, x_{n+1})$ satisfies the equation
\begin{align}
-{\triangle} \Phi+V(x) \Phi=0 \quad \mbox{in} \ \mathbb R^{n+1},
\label{PPhi}
\end{align}
$D_{n+1} \Phi(x, 0)=\phi(x)$, and $\Phi(x, 0)=0$, where $\triangle \Phi =\sum^{n+1}_{i=1} D^2_i \Phi$.

The following estimate for $\Phi$ is standard. We include its proof here for the completeness of presentation. Interested readers may also refer to, e.g., \cite{JL} or \cite{NTTV} for a detailed account.
\begin{lemma}
Let $\phi\in Ran (P_\lambda(H))$ and $\Phi$ be given in (\ref{phi-c}). For any $\rho>0$, we have
\begin{align}
2\rho \|\phi\|^2_{L^2(\mathbb R^n)}\leq \|\Phi\|^2_{H^1(\mathbb R^{n}\times (-\rho, \rho))}\leq 2\rho(1+\frac{\rho^2}{3}(1+\lambda) ) e^{2\rho \sqrt{\lambda}} \|\phi\|^2_{L^2(\mathbb R^n)}.
\label{com-phi}
\end{align}
\label{lemma-1}
\end{lemma}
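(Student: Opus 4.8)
The plan is to use the $L^2(\mathbb R^n)$-orthonormality of the eigenfunctions $\{\phi_k\}$ to separate the $x$- and $x_{n+1}$-variables in each term of $\|\Phi\|^2_{H^1(\mathbb R^n\times(-\rho,\rho))}$, reducing the whole estimate to elementary one-dimensional bounds on $\int_{-\rho}^{\rho}\sinh^2(\sqrt{\lambda_k}t)\,dt$ and $\int_{-\rho}^{\rho}\cosh^2(\sqrt{\lambda_k}t)\,dt$. Since $\Phi$ is a finite sum over $\lambda_k\le\lambda$, there are no convergence issues and all interchanges of $\sum$ and $\int$ are justified.

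First I would split $\|\Phi\|^2_{H^1}=\|\Phi\|^2_{L^2}+\|D_x\Phi\|^2_{L^2}+\|D_{n+1}\Phi\|^2_{L^2}$ and, writing $\beta_k(t)=\alpha_k\sinh(\sqrt{\lambda_k}t)/\sqrt{\lambda_k}$ so that $\Phi(\cdot,t)=\sum_{\lambda_k\le\lambda}\beta_k(t)\phi_k$ and $D_{n+1}\Phi(\cdot,t)=\sum_k\alpha_k\cosh(\sqrt{\lambda_k}t)\phi_k$, use Fubini and orthonormality to get $\|\Phi\|^2_{L^2}=\sum_k\int_{-\rho}^{\rho}\beta_k(t)^2\,dt$ and $\|D_{n+1}\Phi\|^2_{L^2}=\sum_k\alpha_k^2\int_{-\rho}^{\rho}\cosh^2(\sqrt{\lambda_k}t)\,dt$. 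For the tangential gradient, for each fixed $t$ I would integrate by parts in $x$ and use the eigenvalue equation $-\triangle\phi_k+V\phi_k=\lambda_k\phi_k$ (equivalently, that the quadratic form $\langle Hu,u\rangle=\int|D_xu|^2+\int Vu^2$ is diagonalized on $Ran(P_\lambda(H))$) to obtain $\int_{\mathbb R^n}|D_x\Phi(\cdot,t)|^2\,dx=\sum_k\lambda_k\beta_k(t)^2-\int_{\mathbb R^n}V\Phi(\cdot,t)^2\,dx\le\sum_k\lambda_k\beta_k(t)^2$, using $V\ge0$. The only point requiring a little care — and the place where a naive approach fails — is that one should \emph{not} try to diagonalize $\|D_x\Phi\|^2$ term-by-term, since the Gram matrix $\langle D\phi_j,D\phi_k\rangle$ is not diagonal in the presence of the potential; using the full quadratic form of $H$ together with $V\ge0$ sidesteps this cleanly.

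For the lower bound, since $\cosh^2\ge1$ one gets $\|\Phi\|^2_{H^1}\ge\|D_{n+1}\Phi\|^2_{L^2}\ge2\rho\sum_k\alpha_k^2=2\rho\|\phi\|^2_{L^2}$, where $\|\phi\|^2_{L^2}=\sum_k\alpha_k^2$ by orthonormality. For the upper bound I would use the elementary inequalities $|\sinh u|\le|u|\cosh u$ (from $\sinh u=\int_0^u\cosh$) and $\cosh u\le e^{|u|}$, which give $\beta_k(t)^2\le\alpha_k^2 t^2 e^{2\sqrt{\lambda_k}|t|}$, $\lambda_k\beta_k(t)^2\le\alpha_k^2\lambda_k t^2 e^{2\sqrt{\lambda_k}|t|}$, and $\cosh^2(\sqrt{\lambda_k}t)\le e^{2\sqrt{\lambda_k}|t|}$; then bounding $\lambda_k\le\lambda$ and $e^{2\sqrt{\lambda_k}|t|}\le e^{2\rho\sqrt{\lambda}}$ on $(-\rho,\rho)$ and integrating ($\int_{-\rho}^{\rho}t^2\,dt=\tfrac{2\rho^3}{3}$, $\int_{-\rho}^{\rho}1=2\rho$) yields $\|\Phi\|^2_{L^2}\le\tfrac{2\rho^3}{3}e^{2\rho\sqrt{\lambda}}\|\phi\|^2$, $\|D_x\Phi\|^2_{L^2}\le\tfrac{2\rho^3}{3}\lambda e^{2\rho\sqrt{\lambda}}\|\phi\|^2$, and $\|D_{n+1}\Phi\|^2_{L^2}\le2\rho e^{2\rho\sqrt{\lambda}}\|\phi\|^2$. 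Summing the three contributions gives exactly $2\rho\big(1+\tfrac{\rho^2}{3}(1+\lambda)\big)e^{2\rho\sqrt{\lambda}}\|\phi\|^2$, which completes the proof. I do not anticipate a genuine obstacle, as this is the standard lifting estimate; the only thing to get right is the handling of the tangential-gradient term via the quadratic form of $H$ and the sign of $V$.
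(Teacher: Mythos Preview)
Your proposal is correct and follows essentially the same approach as the paper's proof: both use orthonormality of the $\phi_k$ to separate variables, the elementary bounds $|\sinh u|\le |u|\cosh u$ and $\cosh u\le e^{|u|}$, the lower bound via $\cosh\ge 1$ applied to $D_{n+1}\Phi$, and crucially the quadratic-form identity $\int|D_x\Phi|^2+\int V\Phi^2=\sum_k\lambda_k\beta_k(t)^2$ together with $V\ge 0$ to handle the tangential gradient. Your explicit remark that $\langle D\phi_j,D\phi_k\rangle$ need not be diagonal, and that one must pass through the full form of $H$, is exactly the point behind the paper's computation in that step.
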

\begin{proof} 
Using the basic properties of $\cosh$ and $\sinh$, we have
\begin{align}
|\sinh (\sqrt{\lambda_k} x_{n+1})|\leq |\sqrt{\lambda_k} x_{n+1} \cosh(\sqrt{\lambda_k}  x_{n+1})|,
\label{calcu-1}
\end{align}
and
\begin{align}
1\leq \cosh( \sqrt{\lambda_k}  x_{n+1})\leq e^{\sqrt{\lambda_k}  |x_{n+1}|}.
\label{calcu-2}
\end{align}
From (\ref{calcu-1}), (\ref{calcu-2}) and the orthogonality of $\phi_k$, we obtain
\begin{align}
\|\Phi(\cdot, x_{n+1})\|^2_{L^2(\mathbb R^n)}&= \sum_{0<\lambda_k\le \lambda} \frac{\sinh^2 (\sqrt{\lambda_k} x_{n+1})}{\lambda_k} |\alpha_k|^2 \nonumber \\
&\le \sum_{0<\lambda_k\le \lambda} e^{2\sqrt{\lambda_k} |x_{n+1}|} x^2_{n+1} |\alpha_k|^2 \nonumber \\
 &=  e^{2\sqrt{\lambda} |x_{n+1}|} x^2_{n+1} \|\phi\|^2_{L^2(\mathbb R^n)},
 \label{eval-P}
\end{align}
where $\phi$ is given by (\ref{phi-s}).
We integrate the last inequality with respect to $x_{n+1}$ in $(-\rho, \rho)$ to get
\begin{align}
\|\Phi(\cdot, x_{n+1})\|^2_{L^2(\mathbb R^n\times (-\rho, \rho))}\leq \frac{2}{3}\rho^3 e^{2\sqrt{\lambda}\rho }\|\phi\|^2_{L^2(\mathbb R^n)}.
\label{idend-1}
\end{align}
From the definition of $\Phi$, (\ref{calcu-2}) and the orthogonality of $\phi_k$, we have
\begin{align}
\|D_{n+1}\Phi\|^2_{L^2(\mathbb R^n)}&=\|\sum_{0<\lambda_k\le \lambda} \alpha_k\phi_k \cosh(\sqrt{\lambda_k} x_{n+1}) \|^2_{L^2(\mathbb R^n)} \nonumber \\
&\leq e^{2\sqrt{\lambda} |x_{n+1}|} \|\phi\|_{L^2(\mathbb R^n)}^2.
\end{align}
Integrating in $x_{n+1}$ over $(-\rho, \rho)$ gives
\begin{align}
\|D_{n+1}\Phi\|^2_{L^2(\mathbb R^n\times (-\rho, \rho))}\leq 2 \rho e^{2\sqrt{\lambda}\rho }\|\phi\|_{L^2(\mathbb R^n)}^2.
\label{idend-2}
\end{align}

Next, to estimate $D_i \Phi$ for $1\le i\le n$, we consider the following equation
\begin{align}
-\triangle \Phi(\cdot, x_{n+1})+V(x)\Phi(\cdot, x_{n+1})=\sum_{0<\lambda_k\leq \lambda} \alpha_k \sqrt{\lambda_k} \phi_k(\cdot) \sinh( \sqrt{\lambda_k} x_{n+1}), 
\label{eqn-sti}
\end{align}
where $\triangle$ is the $n$-dimensional Laplace operator in $x$ (without $x_{n+1}$).
Integrating (\ref{eqn-sti}) against $\Phi(\cdot,x_{n+1})$ over $\R^n$ and using the orthogonality of $\phi_k$, we get
\begin{align}
\sum^n_{i=1} \| D_i \Phi\|^2_{L^2(\mathbb R^n)}&\leq \int_{\mathbb R^n}\sum^n_{i=1} | D_i \Phi|^2 +V(x) \Phi^2 \nonumber \\
& \leq \sum_{0<\lambda_k\le \lambda} |\alpha_k|^2 \sinh^2(\sqrt{\lambda_k} x_{n+1}) \nonumber \\
&\leq \lambda x_{n+1}^2 e^{2\sqrt{ \lambda} |x_{n+1}| } \|\phi\|_{L^2(\mathbb R^n)}^2.
\label{idend-3}
\end{align}
Thus, by integrating in $x_{n+1}$ over $(-\rho,\rho)$, we obtain
\begin{align}
\sum^n_{i=1} \| D_i \Phi\|^2_{L^2(\mathbb R^n \times (-\rho, \rho) )} \leq \frac{2}{3}\lambda\rho^3 e^{2\sqrt{\lambda}\rho }\|\phi\|_{L^2(\mathbb R^n)}^2.
\label{idend-33}
\end{align}
A combination of (\ref{idend-1}), (\ref{idend-2}) and (\ref{idend-33}) gives the second inequality in (\ref{com-phi}).

From (\ref{calcu-2}) and the orthogonality of $\phi_k$, it holds that
\begin{align}
\|D_{n+1}\Phi(\cdot,x_{n+1})\|^2_{L^2(\mathbb R^n)}&=\|\sum_{0<\lambda_k\le \lambda} \alpha_k\phi_k \cosh(\sqrt{\lambda_k} x_{n+1}) \|^2_{L^2(\mathbb R^n)} \nonumber \\
&\geq \|\phi\|_{L^2(\mathbb R^n)}^2.
\end{align}
Note that this estimate is uniform in $x_{n+1}$. Thus, integrating in $x_{n+1}$ over $(-\rho,\rho)$ leads to the first inequality in (\ref{com-phi}).
\end{proof}

The following proposition quantifies the decay property of the linear combination of eigenfunctions $\phi$, which has been studied in, e.g., \cite{A,BS,GY} and \cite{DSV}.
\begin{proposition}
There exists a constant $\hat{C}$, depending on $\beta_1$, $c_1$, $\beta_2$ and $c_2$ such that for all $\lambda \ge \lambda_1$ and $\phi\in Ran(P_\lambda(H))$, we have
\begin{align}
\|\phi\|^2_{H^1(\mathbb R^n\backslash \mathcal B_{\hat{C}\lambda^{1/\beta_1}})} \leq \frac{1}{2} \|\phi\|^2_{L^2(\mathbb R^n)}.
\label{decay-cr}
\end{align}
\label{propo-new}
\end{proposition}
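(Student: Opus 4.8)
The plan is to establish an Agmon-type exponential decay estimate for each \emph{individual} eigenfunction $\phi_k$ with one common weight, and then recombine to treat the finite sum $\phi=\sum_{\lambda_k\le\lambda}\alpha_k\phi_k$. First I would fix a radius $R_0=\hat{C}_0\lambda^{1/\beta_1}$, using only the lower bound \eqref{V.growth}, so large that $V(x)\ge 2\lambda$ for all $|x|\ge R_0$ and all $\lambda\ge\lambda_1$; here $\lambda_1$ is bounded below in terms of $c_1,\beta_1,n$ since $H\ge-\triangle+c_1(|x|-1)_+^{\beta_1}$. Next I would choose a radial Lipschitz weight $\rho=\rho(|x|)$ with $\rho\equiv 0$ on $\mathcal{B}_{R_0}$ and $|\nabla\rho|^2\le\tfrac14 V\le\tfrac12(V-\lambda)$ on $\{|x|\ge R_0\}$, for instance $\rho'(r)=\tfrac12\sqrt{V(r)}$ for $r\ge R_0$; since $V(r)\ge c_1(r-1)_+^{\beta_1}$, this gives $\rho(R)\ge c(c_1,\beta_1)\,R^{1+\beta_1/2}$ for $R\ge 2R_0$. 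All integrations by parts below are first performed with the bounded truncation $\rho_T=\min\{\rho,T\}$ and then $T\to\infty$ by monotone/Fatou convergence.

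For a fixed $k$ with $\lambda_k\le\lambda$, testing the weak form of \eqref{eigen-k} against $e^{2\rho_T}\phi_k$ and setting $\psi=e^{\rho_T}\phi_k$, a short computation produces the Agmon identity
\[
\int_{\R^n}|\nabla\psi|^2+\int_{\R^n}\big(V-\lambda_k-|\nabla\rho_T|^2\big)\psi^2=0 .
\]
Since $\lambda_k\le\lambda$, the coefficient $V-\lambda_k-|\nabla\rho_T|^2$ is $\ge\tfrac14 V\ge 0$ on $\{|x|\ge R_0\}$ and $\ge-\lambda$ on $\mathcal{B}_{R_0}$ (where $\rho_T\equiv 0$), so, using $\|\phi_k\|_{L^2}=1$,
\[
\|\nabla(e^{\rho_T}\phi_k)\|_{L^2(\R^n)}^2+\tfrac14\int_{\{|x|\ge R_0\}}V\,|e^{\rho_T}\phi_k|^2\ \le\ \lambda\int_{\mathcal{B}_{R_0}}|\phi_k|^2\ \le\ \lambda .
\]
Letting $T\to\infty$ and using $V\ge 2\lambda$ on $\{|x|\ge R_0\}$ yields the $k$-independent bounds $\|e^{\rho}\phi_k\|_{L^2(\R^n)}^2\le 3$ and, after also bounding $\int|\nabla\rho|^2|e^{\rho}\phi_k|^2\le\lambda$ from the second term, $\|e^{\rho}\nabla\phi_k\|_{L^2(\R^n)}^2\le C\lambda$.

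To recombine, the pointwise Cauchy--Schwarz inequality in $k$ gives $|\phi(x)|^2\le\|\phi\|_{L^2(\R^n)}^2\sum_{\lambda_k\le\lambda}|\phi_k(x)|^2$ and similarly for $|\nabla\phi(x)|^2$, so, since $\rho$ is nondecreasing in $|x|$, for any $R\ge R_0$,
\[
\|\phi\|_{H^1(\R^n\backslash\mathcal{B}_R)}^2\ \le\ \|\phi\|_{L^2(\R^n)}^2\,e^{-2\rho(R)}\sum_{\lambda_k\le\lambda}\big(\|e^{\rho}\phi_k\|_{L^2}^2+\|e^{\rho}\nabla\phi_k\|_{L^2}^2\big)\ \le\ C(1+\lambda)\,N(\lambda)\,e^{-2\rho(R)}\,\|\phi\|_{L^2(\R^n)}^2 ,
\]
where $N(\lambda)=\dim Ran(P_\lambda(H))$. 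A standard Weyl-type (or heat-trace) bound, again using $H\ge-\triangle+c_1(|x|-1)_+^{\beta_1}$, gives $N(\lambda)\le C\lambda^{n/2+n/\beta_1}$. Taking $R=\hat{C}\lambda^{1/\beta_1}$ with $\hat{C}\ge 2\hat{C}_0$ large, one has $\rho(R)\ge c\,\hat{C}^{1+\beta_1/2}\lambda^{\frac{1}{2}+\frac{1}{\beta_1}}$, and since $\lambda\ge\lambda_1>0$ the super-exponential factor $e^{-2\rho(R)}$ beats the polynomial $C(1+\lambda)N(\lambda)$ uniformly in $\lambda$; choosing $\hat{C}$ large enough (in terms of $c_1,\beta_1,n$) makes the right-hand side $\le\tfrac12\|\phi\|_{L^2(\R^n)}^2$, which is \eqref{decay-cr}.

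The one genuinely delicate step is the recombination: the $\phi_k$ are orthonormal but the $e^{\rho}\phi_k$ are not, so the weighted estimates cannot be summed with any cancellation and one must pay the factor $N(\lambda)$ (equivalently, bound the weighted trace $\mathrm{tr}\big(P_\lambda(H)\,e^{2\rho}\,P_\lambda(H)\big)$); the argument closes only because the Agmon weight forces $\rho(R)$ to grow like $\lambda^{\frac{1}{2}+\frac{1}{\beta_1}}$, a power of $\lambda$ strictly larger than $1$, which swallows the polynomial loss. By contrast a crude energy estimate (testing \eqref{eigen-k} against $\chi^2\phi_k$ with a cutoff) would only control $\|\phi\|_{L^2(\R^n\backslash\mathcal{B}_R)}$ at the scale $R\sim\lambda^{1/\beta_1}$, not the $H^1$-tail, so the exponential weight is really needed. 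The upper bound \eqref{V.bound} enters this argument only through the background facts that $H$ is a well-defined nonnegative self-adjoint operator with discrete spectrum and $H^1$ eigenfunctions; the quantitative part uses only \eqref{V.growth}.
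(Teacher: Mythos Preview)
Your approach is correct and follows the same Agmon-type strategy the paper invokes (it defers to \cite{DSV} and only supplies Remark~\ref{rmk.Decay}): exponential weighted decay for each eigenfunction $\phi_k$, a gradient bound, then recombination over $\lambda_k\le\lambda$. Two small slips that do not break the argument: (i) you write $\rho'(r)=\tfrac12\sqrt{V(r)}$, but $V$ is not assumed radial; take instead $\rho'(r)=\tfrac12\sqrt{c_1(r-1)_+^{\beta_1}}$, which still gives $|\nabla\rho|^2\le\tfrac14 V$ by \eqref{V.growth} and preserves your lower bound on $\rho(R)$. (ii) The exponent $\tfrac{1}{\beta_1}+\tfrac12$ is \emph{not} strictly larger than $1$ when $\beta_1\ge 2$; fortunately your argument only needs it to be positive, since $\sup_{\lambda\ge\lambda_1}\lambda^p e^{-M\lambda^a}\to 0$ as $M\to\infty$ for any fixed $a>0$ and $p>0$, so choosing $\hat{C}$ (hence $M$) large still forces the right-hand side below $\tfrac12\|\phi\|_{L^2}^2$.

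The packaging differs slightly from the paper's. The paper uses the simpler linear weight $e^{|x|/2}$, obtaining $\|e^{|x|/2}\phi_k\|_{L^2}\le Ce^{C\lambda^{1/\beta_1}}$ from \eqref{phik.decay}, and then recovers the weighted gradient bound by summing local Caccioppoli inequalities over unit balls (this is precisely the point of Remark~\ref{rmk.Decay}: it replaces differentiating the equation, which would need $DV$). You instead take the sharper Agmon weight $\rho'\sim r^{\beta_1/2}$ and read off both $e^\rho\phi_k$ and $e^\rho\nabla\phi_k$ directly from the Agmon identity. Your route gives weighted bounds that are only polynomial in $\lambda$, so the $N(\lambda)$ loss in recombination is harmless; the paper's route carries an $e^{C\lambda^{1/\beta_1}}$ loss balanced against the gain $e^{-\hat{C}\lambda^{1/\beta_1}}$, which is tighter but still closes for $\hat{C}$ large. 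Both arguments use only \eqref{V.growth} quantitatively, as you note.
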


\begin{remark}\label{rmk.Decay}
A proof of \eqref{decay-cr} can be found in \cite[Theorem 1.4]{DSV} or \cite[Lemma 2.1]{GY}. However, in their proofs a mild condition for $DV(x)$ is needed because they took differentiation to the entire equation of eigenfunctions in order to get the estimate for $D \phi_k$. We here claim that 
    this decay property depends only on the growth bound of $V$. In particular, the differentiability of $V$ is not relevant. The argument is a simple modification of the proof of \cite[Theorem 1.4]{DSV} or \cite[Lemma 2.1]{GY}. By checking their proofs, we see that if $(-\triangle + V) \phi_k = \lambda_k \phi_k$, then for $R$ satisfying $R^{\beta_1}>\max\{ (\lambda_k+2)/c_1, 1\}$,
    \begin{equation}\label{phik.decay}
        \| e^{|x|/2}\phi_k \|_{L^2(\R^n)} \le 7e^{R+1} \| \phi_k \|_{L^2(\R^n)}.
    \end{equation}
    This estimate does not require differentiability of $V$. Now, to get the estimate for $D\phi_k$, we only need to apply local Caccioppoli inequalities. Indeed, for any $\mathcal{B}_{1}(z) \subset \R^n$, the Caccioppoli inequality implies
    \begin{equation}
    \begin{aligned}
        \| D \phi_k \|_{L^2(\mathcal{B}_{1}(z))}^2 & \le C (1 + \lambda_k) \| \phi_k \|_{L^2(\mathcal{B}_{2}(z))}^2 + \| |V|^{\frac12} \phi_k \|_{L^2(\mathcal{B}_{2}(z))}^2\\
        & \le C\big( 1 + c_1 R^{\beta_2} \big) \| \phi_k \|_{L^2(\mathcal{B}_{2}(z))}^2 + C\| (1+|\cdot|)^{\beta_2/2}\phi_k \|_{L^2(\mathcal{B}_{2}(z))}^2.
    \end{aligned}
    \end{equation}
    Multiplying the inequality by $e^{|z|}$ and using the fact $e^{|z|} \approx e^{|x|}$ for any $x\in \mathcal{B}_{2}(z)$, we have
    \begin{align}
        \| e^{|x|/4} D \phi_k \|_{L^2(\mathcal{B}_{1}(z))}^2\le  &C\big( 1 + c_1 R^{\beta_2} \big) \| e^{|x|/4}\phi_k \|_{L^2(\mathcal{B}_{2}(z))}^2 \nonumber \\ &+ C\| (1+|\cdot|)^{\beta_2/2} e^{|x|/4} \phi_k \|_{L^2(\mathcal{B}_{2}(z))}^2.
    \end{align}
    Covering $\R^n$ by $\mathcal{B}_1(z)$ with finite overlaps, summing over $z$ and using \eqref{phik.decay}, we have
    \begin{equation}\label{Dphik.decay}
    \begin{aligned}
        \| e^{|x|/4} D \phi_k \|_{L^2(\R^n)}^2 & \le C\big( 1 + c_1 R^{\beta_2} \big) \| e^{|x|/4}\phi_k \|_{L^2(\R^n)}^2 + C\| (1+|\cdot|)^{\beta_2/2} e^{|x|/4} \phi_k \|_{L^2(\R^n)}^2 \\
        & \le C(1+c_1 R^{\beta_2}) e^{2(R+1)} \norm{\phi_k}_{L^2(\R^n)}^2.
    \end{aligned}
    \end{equation}
    Once the exponential decay estimates for both $\phi_k$ and $D\phi_k$ are established, the proof of \eqref{decay-cr} follows from the same argument as, say, \cite[Theorem 1.4]{DSV}.
\end{remark}

Based on (\ref{decay-cr}), we can show that the global $H^1$ norm of $\Phi$ can be controlled by its local norm.
In fact, it follows from (\ref{decay-cr}) that
\begin{align}
\|\phi\|^2_{H^1(\mathbb R^n\backslash \mathcal B_{\hat{C}\lambda^{1/\beta_1}})}\leq  \|\phi\|^2_{L^2(\mathcal B_{\hat{C}\lambda^{1/\beta_1}})}.
\label{cut-o}
\end{align}
This yields
\begin{align}
\|\phi\|^2_{H^1(\mathbb R^n)}\leq 2 \|\phi\|^2_{H^1(\mathcal B_{\hat{C}\lambda^{1/\beta_1}})},
\label{cut-off}
\end{align}
and
\begin{align}
\|\phi\|^2_{L^2 (\mathbb R^n)}\leq 2 \|\phi\|^2_{L^2(\mathcal B_{\hat{C}\lambda^{1/\beta_1 }})}.
\label{L2}
\end{align}

Since $\Phi(\cdot, x_{n+1})\in Ran(P_\lambda(H))$, the estimate (\ref{cut-off}) implies
\begin{align}
\|\Phi\|^2_{H^1(\mathbb R^n)}\leq 2 \|\Phi\|^2_{H^1(\mathcal B_{\hat{C}\lambda^{1/\beta_1}})}.
\label{inte-x}
\end{align}
Since $D_{n+1} \Phi(\cdot, x_{n+1})\in Ran(P_\lambda(H))$, the estimate (\ref{L2}) gives that
\begin{align}\label{inte-xn+1}
\|D_{n+1} \Phi\|^2_{L^2 (\mathbb R^n)}\leq 2 \|D_{n+1} \Phi\|^2_{L^2(\mathcal B_{\hat{C}\lambda^{1/\beta_1}})}.
\end{align}
Combining both \eqref{inte-x} and \eqref{inte-xn+1} and integrating in $x_{n+1}$ over $(-1, 1)$, we obtain
\begin{align}
\|\Phi\|_{H^1 (\mathbb R^n\times (-1, 1))}^2 \leq 2 \| \Phi\|_{H^1(\mathcal B_{\hat{C}\lambda^{1/\beta_1}}\times (-1, 1))}^2.
\label{cut-est}
\end{align}

We are ready to provide the proof of Theorem \ref{th1}.

\begin{proof}[Proof of Theorem \ref{th1}]
Let $L = 2\lceil \hat{C} \lambda^{1/\beta_1} \rceil + 1$, where $\hat{C}$ is given in Proposition \ref{propo-new} and $\lceil a \rceil$ means the smallest integer that does not exceed $a$. Recall that $\Lambda_L = [-L/2, L/2]^n$. Then it is obvious that $\mathbb{B}_{\hat{C} \lambda^{1/\beta_1}} \subset \Lambda_L$. Moreover, we can decompose $\Lambda_L$ as
\begin{equation}
    \Lambda_L = \bigcup_{k\in \Lambda_L \cap \Z^n} \Big(k + \big[-\frac12,\frac12 \big]^n \Big).
\end{equation}
Observe that $|k| \le \sqrt{n} \lceil \hat{C} \lambda^{1/\beta_1} \rceil$ for each $k\in \Lambda_L \cap \Z^n$.
Let $\gamma \in (0,\frac12)$ be as in \eqref{geom-om} and 
\begin{equation}\label{def.delta}
    \delta := \gamma^{1+ (\sqrt{n} \lceil \hat{C} \lambda^{1/\beta_1} \rceil)^\sigma } \le \gamma^{1+|k|^\sigma }, \quad \text{for all }  k\in \Lambda_L \cap \Z^n.
\end{equation}
The assumption (\ref{geom-om}) on $\Omega$ shows that the intersection $\Omega\cap (k+[-\frac{1}{2}, \frac{1}{2}]^n)$ contains a ball centered at some $z_k \in k+[-\frac{1}{2}, \frac{1}{2}]^n$ with radius $\delta $ for each $k$. Thus, the sequence $\{z_k\}$ in the set $\Omega\cap\Lambda_L$ is $(1, \delta)$-equidistributed in $\Lambda_L$.

Now we show an interpolation inequality, using the previous two three-ball inequalities. We replace $v$ in the equation (\ref{eqn-ste}) by $\Phi$ in the equation (\ref{PPhi}). Note that $\Phi$ is odd in $x_{n+1}$. We combine (\ref{threeball-1}) in Lemma \ref{lemma-2} and (\ref{threeball-2}) in Lemma \ref{lemma-3} with $\delta$ and $L$ defined above to get
\begin{align*}
\| \Phi\|_{H^1(X_1)} &\leq \delta^{-2\alpha_1} \exp\big(C(1+ \mathcal{G}(V_1,V_2,9\sqrt{n} L ) ) \big) \| \Phi\|^{1-\alpha_1} _{H^1(\tilde{X}_{R_3})} \|\Phi\|^{\alpha_1}_{H^1(P_1(L))} \nonumber \\
&\leq \delta^{-2\alpha_1-\alpha \alpha_1} \exp\big(C(1+ \mathcal{G}(V_1,V_2,9\sqrt{n} L ) ) \big) \|\Phi\|^{\alpha \alpha_1}_{H^1(P_3(L))} \|\frac{\partial \Phi}{\partial y_{n+1}}\|^{\alpha_1 (1-\alpha)}_{L^2(D_\delta(L))}\|\Phi\|^{1-\alpha_1}_{H^1(\tilde{X}_{R_3})}\nonumber \\
&\leq \delta^{-3\alpha_1} \exp\big(C(1+ \mathcal{G}(V_1,V_2,9\sqrt{n} L ) ) \big) \|\phi\|^{\hat{\alpha}}_{L^2(D_\delta(L))} \|\Phi\|^{1-\hat{\alpha}}_{H^1(\tilde{X}_{R_3})},
\end{align*}
where $\hat{\alpha}=\alpha_1(1-\alpha)$ and we have used the facts $P_3(L)\subset \tilde{X}_{R_3}$ and $\frac{\partial \Phi}{\partial y_{n+1}}(\cdot,0)=\phi$. Recall the value of $\alpha_1$ in Lemma \ref{lemma-3} is defined by \eqref{def.alpha1}. Thus $\alpha_1 \approx \hat{\alpha} \approx \frac{1}{|\log \delta|}$ for any $\delta\in (0,\frac12)$. Hence $\delta^{-3\alpha_1} \le C$ and then
\begin{align}\label{est.PhiOnX1}
\| \Phi\|_{H^1(X_1)} \leq \exp\big(C(1+ \mathcal{G}(V_1,V_2,9\sqrt{n} L ) ) \big) \|\phi\|^{\hat{\alpha}}_{L^2(\Omega\cap \Lambda_L)} \|\Phi\|^{1-\hat{\alpha}}_{H^1(\tilde{X}_{R_3})},
\end{align}
where we have also used the fact $D_\delta(L)\subset \Omega\cap \Lambda_L$.

Recalling the definitions of $L = 2\lceil \hat{C} \lambda^{1/\beta_1} \rceil + 1$ and $R_3 = 9\sqrt{n}$, we have $\Lambda_L\subset \Lambda_{L+R_3}\subset \Lambda_{9\sqrt{n}L}$. By Assumption (A), we have
\begin{equation}
    \mathcal{G}(V_1,V_2,9\sqrt{n} L ) \le C(1+\frac{9}{2} nL)^{\frac{\beta_2}{2}} \le C_* \lambda^{\frac{\beta_2}{2\beta_1}},
\end{equation}
for $\lambda\ge \lambda_1$.
Therefore, we can write (\ref{est.PhiOnX1}) as
\begin{align}
\| \Phi\|_{H^1(X_1)} \leq   \exp(C_* \lambda^{\frac{\beta_2}{2\beta_1} }) \|\phi\|^{\hat{\alpha}}_{L^2(\Omega\cap \Lambda_L)} \|\Phi\|^{1-\hat{\alpha}}_{H^1(\tilde{X}_{R_3})}.
\label{ready-three}
\end{align}

Next, applying $\rho = R_3$ and $\rho_1$ in Lemma \ref{lemma-1} for upper and lower bounds, respectively, we obtain
\begin{align}
\frac{\|\Phi\|^2_{H^1(\mathbb R^{n}\times (-R_3, R_3))}} {\|\Phi\|^2_{H^1(\mathbb R^{n}\times (-1, 1))}  } \leq 2 R_3 (1+\frac{R_3^2}{3}(1+\lambda) ) \exp({2R_3 \sqrt{\lambda}}) \leq  \exp({C_2 \sqrt{\lambda}}),
\end{align}
where $C_2$ depends only on $n$. With the aid of (\ref{cut-est}) and the fact $\mathcal B_{\hat{C}\lambda^{1/\beta_1}}\subset \Lambda_L$, we get
\begin{align}
\|\Phi\|_{H^1(\mathbb R^{n}\times (-R_3, R_3))}&\leq  \exp(\frac12 C_2\sqrt{\lambda} ) \|\Phi\|_{H^1(\mathbb R^{n}\times (-1, 1))} \nonumber \\
&\leq 2 \exp(\frac12 C_2\sqrt{\lambda} )\|\Phi\|_{H^1(\Lambda_L\times (-1, 1))}
\end{align} 
Recall that $X_1 = \Lambda_L\times (-1,1)$. Thanks to the interpolation inequality (\ref{ready-three}), we obtain
\begin{align}
\|\Phi\|_{H^1(\mathbb R^{n}\times (-R_3, R_3))}&\leq \exp(C_3 \lambda^{\frac{\beta_2}{2\beta_1} }) \|\phi\|^{\hat{\alpha}}_{L^2(\Omega\cap \Lambda_L)} \|\Phi\|^{1-\hat{\alpha}}_{H^1(\tilde{X}_{R_3})},
\end{align}
where we also use the fact $\frac{\beta_2}{2\beta_1} \ge \frac12$.
Since $\tilde{X}_{R_3}\subset \mathbb R^n\times (-R_3, R_3)$,  it follows that
\begin{align}
\|\Phi\|_{H^1(\mathbb R^{n}\times (-R_3, R_3))} \leq \exp(\hat{\alpha}^{-1} C_3 \lambda^{\frac{\beta_2}{2\beta_1} }) \|\phi\|_{L^2(\Omega\cap \Lambda_L)}.
\end{align}
Recall that $\hat{\alpha}^{-1} \approx \alpha_1^{-1} \approx |\log \delta| \approx |\log \gamma| \lambda^{\frac{\sigma}{\beta_1}}$. It follows that
\begin{align}
\|\Phi\|_{H^1(\mathbb R^{n}\times (-R_3, R_3))} &\le \exp(C |\log \gamma| \lambda^{\frac{\sigma}{\beta_1} + \frac{\beta_2}{2\beta_1} }) \|\phi\|_{L^2(\Omega\cap \Lambda_L)} \nonumber\\ &= (\frac{1}{\gamma})^{C\lambda^{\frac{\sigma}{\beta_1}+\frac{\beta_2}{2\beta_1}}}  \|\phi\|_{L^2(\Omega\cap \Lambda_L)}.
\label{fina-3}
\end{align}

Finally, using the lower bound in (\ref{com-phi}) with $\rho = R_3$, we obtain
\begin{align}
\|\phi\|_{L^2(\mathbb R^n)}\leq (\frac{1}{2R_3})^{\frac{1}{2}}\|\Phi\|_{H^1(\mathbb R^{n}\times (-R_3, R_3))} \le (\frac{1}{\gamma})^{C\lambda^{\frac{\sigma}{\beta_1}+\frac{\beta_2}{2\beta_1}}}  \|\phi\|_{L^2(\Omega\cap \Lambda_L)}.
\label{fina-2}
\end{align}
This completes the proof of Theorem \ref{th1}.
\end{proof}

\section{Appendix}
In this Appendix, we prove the quantitative global Carleman estimate in Proposition \ref{Pro4} and a lemma concerning the geometric properties on the introduced sets. Interested readers may refer to the survey \cite{K} and \cite{LL}
for more exhaustive literature for local and global Carleman estimates.
We will use the integration by parts repeatedly to get the desired estimate. Recall that
the weight function we are using is $$\psi(\hat{r}(y))= e^{-s\hat{r}(y)}, $$
where $\hat{r}(y)=|y-b|$, where $b=(0, \cdots, 0, -b_{n+1})$ with small $b_{n+1} = \frac{\delta}{100}$ and $\delta \in (0,\frac12)$. For convenience,
let $h(y)=-\hat{r}(y)=-|y-b|$.
Note that
\begin{align}
|D h(y)|=1 \quad \mbox{and} \quad |D^2 h(y)|\leq \frac{C}{\delta}
\label{hhnew}
\end{align} for $y\in \mathbb B^+_\delta$.
Next we present the proof of  Proposition \ref{Pro4}.

\begin{proof}[Proof of  Proposition \ref{Pro4}]
We first prove the desired estimate for the case $V = V_1\in W^{1,\infty}(\mathbb{B}_{\delta}^+)$.

Choose
\begin{equation} w(y)= e^{\tau \psi(y)} u(y).
\label{wvv}\end{equation} 
Since $u(y)\in C_0^\infty (\mathbb B^+_{\delta}\cup\{y_{n+1}=0\})$, then $w(y)\in C_0^\infty (\mathbb B^+_{{\delta}}\cup\{y_{n+1}=0\})$. We introduce  an $(n+1)$-dimensional second order elliptic operator $$P_0=-\triangle +V(y).$$
Define the conjugate operator as
$$ P_\tau w= e^{\tau\psi(y)} P_0( e^{-\tau\psi(y)}w).  $$
Direct calculations show that
\begin{align}
P_\tau w=&-\triangle w+2\tau D \psi\cdot  D w -\tau^2 |D \psi |^2  w+ \tau\triangle \psi w +V(y) w \nonumber \\
=&-\triangle w+2\tau s\psi D h\cdot  D w-\tau^2 s^2\psi^2  w+\tau\psi a(y,s)w+V(y) w\nonumber ,
\end{align}
where
\begin{equation} a(y,s)=s^2 +s \triangle h.
\label{axs}
\end{equation}
We split the expression $P_\tau w $ into the sum of two expressions $P_1 w$ and $P_2 w$, where
$$ P_1 w=-\triangle w-\tau^2 s^2\psi^2  w+V(y) w, $$
$$ P_2 w=2\tau s\psi D h \cdot D w.  $$
Then
\begin{equation}
P_\tau w= P_1 w+ P_2 w+ \tau\psi a(y,s)w.
\end{equation}
We compute the $L^2$ norm of $P_\tau w$. By the triangle inequality, we have
\begin{align}
\|P_\tau w\|^2&=\|P_1 w+P_2 w+\tau\psi a(y,s)w\|^2 \nonumber \\
&\geq \frac12 \|P_1 w\|^2+ \frac12 \|P_2 w\|^2+ \langle P_1 w, \  P_2 w\rangle -\|\tau\psi a(y,s)w\|^2.
\label{keyinner}
\end{align}
Later on, we will absorb the term $\|\tau\psi a(y,s)w\|^2$.  

Now we are going to derive a lower bound for the inner product in (\ref{keyinner}).
Let us write
\begin{equation}
\langle P_1 w, \  P_2 w\rangle=\sum^3_{k=1} I_k,
\label{right}
\end{equation}
where
\begin{align} &I_1= \langle -\triangle w, \  2\tau s\psi D h \cdot D w    \rangle, \nonumber \\
 &I_2=  \langle -\tau^2 s^2\psi^2  w, \      2\tau s\psi D h  \cdot D w         \rangle,  \nonumber \\
& I_3= \langle V(y)w, \      2\tau s\psi D h \cdot  D w\rangle,   
  \end{align}
We will estimate each term on the right-hand side of (\ref{right}). Recall that $w(y)\in C_0^\infty (\mathbb B^+_{{\delta}}\cup\{y_{n+1}=0\})$ and $h(x)$ satisfies (\ref{hhnew}). Performing the integration by parts shows that
\begin{align}
I_1=&2\tau s^2 \int_{\mathbb B_{{\delta}}^+} \psi (Dh\cdot D w)^2\,dy+2\tau s \int_{\mathbb B_{{\delta}}^+}  \psi  D_j w D_{ij} h D_i w\,dy \nonumber \\ &\quad +2\tau s\int_{\mathbb B_{{\delta}}^+}\psi D_j w D_{ij} w D_i h \,dy-2\tau s\int_{\partial \mathbb B_{{\delta}}^+\cap \{y_{n+1}=0\}} \psi  D h \cdot D w \frac{\partial w} {\partial y_{n+1}} \,dS\nonumber \\
=&I_1^{1}+I_1^2+ I_1^3+I_1^4,
\label{lala}
\end{align}
where we have adopted the convention $D_{ij} w = D_i D_j w$ and the Einstein notation so that the repeated index is summed. By the integration by parts, the third term $I_1^3$ can be computed as
\begin{align}
I_1^3&=\tau s \int_{\mathbb B_{{\delta}}^+}\psi D_i h D_i (|D w|^2)\,dy \nonumber  \\
&=-\tau s\int_{\mathbb B_{{\delta}}^+} \psi \triangle h |D w|^2 \,dy-\tau s^2 \int_{\mathbb B_{{\delta}}^+}\psi  |D w|^2\,dy \nonumber  \\
&\quad +\tau s\int_{\partial \mathbb B_{{\delta}}^+\cap \{y_{n+1}=0\}}\psi  |D w|^2 \frac{\partial h} {\partial y_{n+1}} dS.
\label{lala2}
\end{align}
Combining (\ref{hhnew}), (\ref{lala}) and (\ref{lala2}), we can estimate $I_1$ from below
\begin{align}
I_1 \geq -\tau s^2 \int_{\mathbb B_{{\delta}}^+}\psi  | Dw|^2 \, dy- \frac{C\tau s}{\delta}  \int_{\mathbb B_{{\delta}}^+}\psi |D w|^2\, dy-C\tau s\int_{\partial \mathbb B_{{\delta}}^+\cap\{y_{n+1} = 0\}}\psi |\frac{\partial w}{\partial y_{n+1}}|^2\, dy,
\label{II1}
\end{align}
where we have also used the fact $D_{i} w  =0 $ on $\partial \mathbb B_{{\delta}}^+ \cap \{ y_{n+1} = 0\}$ for $i=1,2,\cdots, n$. Choosing $s = \frac{C_2}{\delta}$ for sufficiently large $C_2$ (dependig only on $n$), we have
\begin{equation}
    I_1 \ge -\frac65 \tau s^2 \int_{\mathbb B_{{\delta}}^+}\psi  | Dw|^2 \, dy -C\tau s\int_{\partial \mathbb B_{{\delta}}^+\cap\{y_{n+1} = 0\}}\psi |\frac{\partial w}{\partial y_{n+1}}|^2\, dy.
\end{equation}

Now we compute the term $I_2$ using integration by parts. Since $w(y)\in C_0^\infty (\mathbb B^+_{\delta}\cup\{y_{n+1}=0\})$, we have
\begin{align}
I_2&=-\tau^3s^3 \int_{\mathbb B_{{\delta}}^+}\psi^3   D h \cdot D w^2 \, dy \nonumber  \\
&=3 \tau^3 s^4 \int_{\mathbb B_{{\delta}}^+}\psi^3   w^2 \, dy +\tau^3 s^3 \int_{\mathbb B_{{\delta}}^+} \psi^3  \triangle h w^2 \, dy \nonumber  \\
&\geq 3 \tau^3 s^4 \int_{\mathbb B_{{\delta}}^+}\psi^3   w^2 \, dy- \frac{C\tau^3 s^3}{\delta}  \int_{\mathbb B_{{\delta}}^+  }\psi^3   w^2 \, dy.
\end{align}
Again, choosing $s = \frac{C_2}{\delta}$ with $C_2$ large enough, we deduce that
\begin{align}
I_2\geq \frac{14}{5} \tau^3 s^4 \int_{\mathbb B_{{\delta}}^+}\psi^3  w^2 \, dy.
\label{II2}
\end{align}

We proceed to estimate the term $I_3$. It is here that we take advantage of the gradient information of the potential. Using integration by parts shows that
\begin{align}
I_3 &=\tau s \int_{ \mathbb B_{{\delta}}^+} V(y) \psi  D h \cdot D w^2 \,dy \nonumber\\
& =-\tau s \int_{ \mathbb B_{{\delta}}^+} \psi  D V(y)\cdot D h  w^2  \,dy-\tau s^2 \int_{ \mathbb B_{{\delta}}^+} V(y) \psi
 w^2\,dy\nonumber\\
&\quad -\tau s \int_{ \mathbb B_{{\delta}}^+} V(y)\psi  \triangle h w^2\,dy.
\end{align}
Thus, we have
\begin{align}
I_3\geq -C\tau s^2 \|V\|_{W^{1,\infty}(\mathbb{B}_{\delta}^+)}\int_{ \mathbb B_{{\delta}}^+}\psi w^2\,dy .
\label{II4}
\end{align}
Now, assume $\tau > C_1 \|V\|_{W^{1,\infty}(\mathbb{B}_{\delta}^+)}^{\frac12} $ with sufficiently large $C_1>0$. Then
\begin{equation}
    I_3 \ge -\frac15 \tau^3 s^2 \int_{ \mathbb B_{{\delta}}^+}\psi^3  w^2\,dy,
\end{equation}
where we also used the observation $\psi \approx 1$ in $\mathbb B_{{\delta}}^+$ (this is because $s\approx \delta^{-1}$, $\hat{r} \approx \delta$ in $\mathbb B_{{\delta}}^+$ and $\psi = e^{-s\hat{r}}$).

Combining all the estimates on each $I_k$, we arrive at
\begin{align}\label{III}
\langle P_1 w, \  P_2 w\rangle = \sum^3_{k=1}I_k&\geq \frac{13}{5}\tau^3 s^4 \int_{\mathbb B_{{\delta}}^+}\psi^3  w^2 \, dy 
-\frac{6}{5} \tau s^2 \int_{\mathbb B_{{\delta}}^+}\psi  | Dw|^2 \, dy \\
& \quad -C\tau s
\int_{\partial \mathbb B_{{\delta}}^+\cap \{y_{n+1}=0\}}\psi |\frac{\partial w}{\partial y_{n+1}}|^2 \, dS,  \nonumber 
\end{align}
under the assumptions $s = \frac{C_2}{\delta}$ and $\tau > C_1 \|V\|_{W^{1,\infty}(\mathbb{B}_{\delta}^+)}^{\frac12}$ for sufficiently large $C_1, C_2 >0$.

We want to control the gradient term on the second term on the right-hand side of (\ref{III}). To this end, we consider the following inner product
\begin{equation}
\langle P_1 w, \ \tau s^2\psi w\rangle =\sum^3_{k=1} L_k,
\label{product}
\end{equation}
where
\begin{align}
&L_1=\langle -\triangle w, \ \tau s^2 \psi  w\rangle,\nonumber\\
&L_2=\langle -\tau^2 s^2\psi^2  w, \tau s^2\psi w\rangle=-\tau^3 s^4\int_{ \mathbb B_{{\delta}}^+}\psi^3  w^2\,dy,
\label{LL2}
\end{align}
and
\begin{align}
L_3&=\langle V(y) w,\ \tau s^2 \psi w  \rangle \ge 0.
\label{LL4}
\end{align}
We want to find out a lower bound estimate for $L_1$ to include the gradient term. Recall that $w(y)\in C_0^\infty (\mathbb B^+_{\delta}\cup\{y_{n+1}=0\})$. It follows from the integration by parts and Cauchy-Schwartz inequality that
\begin{align}
L_1&=\tau s^2 \int_{ \mathbb B_{{\delta}}^+}\psi | D w|^2\, dy+\tau s^3  \int_{ \mathbb B_{{\delta}}^+}  \psi Dh \cdot  D w w\,dy \nonumber \\
&\qquad -\tau s^2\int_{\partial \mathbb B_{{\delta}}^+\cap\{y_{n+1}=0\}}\psi  |\frac{\partial w}{\partial y_{n+1}}|^2 d S \nonumber \\
&\geq  \frac{4}{5}\tau s^2 \int_{ \mathbb B_{{\delta}}^+}\psi  |D w|^2 \, dy-C\tau s^{4} \int_{ \mathbb B_{{\delta}}^+}\psi w^2 \, dy \nonumber \\
&\quad -\tau s^2\int_{\partial \mathbb B_{{\delta}}^+\cap\{y_{n+1}=0\}}\psi |\frac{\partial w}{\partial y_{n+1}}|^2 d S.
\label{LL1}
\end{align}
Taking (\ref{product}) -- (\ref{LL1})  and $\tau> C_1$ into account gives
\begin{align}
\langle P_1 w, \ 2 \tau s^2 \psi  w\rangle &\geq \frac{8}{5} \int_{ \mathbb B_{{\delta}}^+}\psi  |Dw|^2 \, dy - \frac{11}{5}\tau^3 s^4 \int_{ \mathbb B_{{\delta}}^+}\psi^3  w^2\,dy \nonumber \\
&\quad -C\tau s^2 \int_{\partial \mathbb B_{{\delta}}^+\cap\{y_{n+1}=0\}}\psi |\frac{\partial w}{\partial y_{n+1}}|^2\, dS.
\label{LLLii}
\end{align}
Since
\begin{align}
\frac12 \|P_1 w\|^2 +2 \|\tau s^2\psi w\|^2 \geq \langle P_1 w,  2\tau s^2 \psi w\rangle,
\end{align}
from (\ref{keyinner}), we obtain
\begin{align}
\|P_\tau w\|^2 +2 \|\tau s^2\psi w\|^2 &\geq \langle P_1 w,  2\tau s^2 \psi w\rangle + \langle P_1 w,  P_2 w\rangle -\|\tau\psi a(y,s)w\|^2 \nonumber \\
& \ge \frac{2}{5}\tau^3 s^4 \int_{ \mathbb B_{{\delta}}^+}\psi^3  w^2\,dy + \frac{2}{5} \int_{ \mathbb B_{{\delta}}^+}\psi  |Dw|^2 \, dy \nonumber \\
& \quad - C\tau s^2 \int_{\partial \mathbb B_{{\delta}}^+\cap\{y_{n+1}=0\}}\psi |\frac{\partial w}{\partial y_{n+1}}|^2\, dS -\|\tau\psi a(y,s)w\|^2.
\label{dom-af}
\end{align}

Next, using \eqref{axs} and \eqref{hhnew}, we obtain
\begin{equation}\label{axs-w}
    2 \|\tau s^2\psi w\|^2 + \|\tau\psi a(y,s)w\|^2 \le C\tau^2 s^4 \int_{\mathbb{B}_{\delta}^+ } \psi^2 w^2 \le \frac15 \tau^3 s^4 \int_{\mathbb{B}_{\delta}^+ } \psi^3 w^2,
\end{equation}
where in the second inequality, we assume $\tau > C_1$ for sufficiently large $C_1$ and use the fact $\psi \approx  1$.
Combining \eqref{dom-af} and \eqref{axs-w}, we get
\begin{align}\label{main.Ptau}
\|P_\tau w\|^2&+ C \tau s^2
\int_{\partial \mathbb B_{{\delta}}^+\cap\{y_{n+1}=0\}}\psi |\frac{\partial w}{\partial y_{n+1}}|^2 \, dS \nonumber \\
 &\geq \frac15 \tau^3 s^4 \int_{\mathbb B_{{\delta}}^+}\psi^3  w^2 \, dy + \frac25 \tau s^2\int_{ \mathbb B_{{\delta}}^+} \psi|D w|^2\,dy.
\end{align}
Since $w = e^{\tau \psi} u$, the triangle inequality and \eqref{hhnew} imply
\begin{equation}
    |D w|^2 \ge \frac12 e^{2\tau \psi} |D u|^2 - \tau^2 s^2 \psi^2 e^{2\tau \psi} u^2.
\end{equation}
This, together with \eqref{main.Ptau}, the definition of $P_\tau$ and the fact $u(y)\in C_0^\infty (\mathbb B^+_{\delta}\cup\{y_{n+1}=0\})$, gives
\begin{align}
\| e^{\tau \psi} P_0 u \|^2&+ C \tau s^2
\int_{\partial \mathbb B_{{\delta}}^+\cap\{y_{n+1}=0\}}\psi  e^{2\tau \psi}|\frac{\partial u}{\partial y_{n+1}}|^2 \, dS \nonumber \\
 &\geq \frac{1}{10} \tau^3 s^4 \int_{\mathbb B_{{\delta}}^+}\psi^3  e^{2\tau \psi} u^2 \, dy + \frac{1}{5} \tau s^2\int_{ \mathbb B_{{\delta}}^+} \psi e^{2\tau \psi}|D u|^2\,dy.
 \label{CarlemanP0}
\end{align}
Thus, we have proved the desired estimate for the case $V \in W^{1,\infty}(\mathbb{B}_\delta^+)$. Recall that here we assume $s = \frac{C_2}{\delta}$ and $\tau > C_1(1+ \|V\|_{W^{1,\infty}(\mathbb{B}_{\delta}^+)}^{\frac12})$ for sufficiently large $C_1, C_2>0$.

Finally, we cosider the general case $V = V_1 + V_2$ with $V_1 \in W^{1,\infty}(\mathbb{B}_\delta^+)$ and $V_2 \in L^\infty(\mathbb{B}_\delta^+)$. Here is the small trick to get the expected exponent for $V_2$. First, the estimate \eqref{CarlemanP0} holds with $P_0 = (-\triangle + V_1)$ and 
$\tau > C_1(1+\norm{V_1}_{W^{1,\infty}}^{\frac12} ).$
To insert $V_2$, using $V_1 = V - V_2$ and the triangle inequality, we have
\begin{align}
    \| e^{\tau \psi}(- \triangle u +V_1 u)\|^2 \le 2\| e^{\tau \psi}(- \triangle u +Vu)\|^2 + 2 \norm{V_2}^2_{L^\infty(\mathbb{B}_{\delta}^+) } \| e^{\tau \psi} u\|^2 .
\end{align}
Note that the last term on the right-hand side can be absorbed to the right-hand side of \eqref{CarlemanP0}, provided $\tau > C_1(1+\norm{V_1}_{W^{1,\infty}(\mathbb{B}_{\delta}^+)}^{\frac12} +\norm{V_2}_{L^\infty(\mathbb{B}_{\delta}^+)}^{\frac23})$ for sufficiently large $C_1$. This completes the proof of Proposition \ref{Pro4}.
\end{proof}

In the following lemma, we will show the conclusions (\ref{state-2}) and (\ref{three-4}), which were also proved 
 in \cite[Lemma 3.3]{NTTV}.
\begin{lemma}
Let $L\geq 5$ be an odd integer and $\delta\in (0, \frac{1}{2})$. Let $\{ z_i\}$ be a $(1, \delta)$-equidistributed sequence in $\Lambda_L$ and $Q_L = \Lambda_L \cap \Z^n$.
\begin{itemize}
    \item[(i)] It holds $X_1\subset \cup_{i\in Q_L} S_2(z_i)$.

    \item[(ii)] Let $f \in H^1_{\rm loc}(\R^n)$. There exists $\gamma_n$ depending on $n$ such that
\begin{align*}
\sum_{i\in Q_L} \|f\|_{H^1(S_3(z_i))}\leq \gamma_n \|f\|_{H^1(\cup_{i\in Q_L}S_3(z_i))}.
\end{align*}
\end{itemize}
\label{lemma-app}
\end{lemma}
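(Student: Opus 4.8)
The plan is to prove both assertions by elementary covering and counting arguments, using only the fact that each $z_i$ lies in the open unit cube $i+\Lambda_1$, so that $\|z_i-i\|_\infty<\tfrac12$.

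For (i), fix $y=(x,x_{n+1})\in X_1$, so $x\in\Lambda_L$ and $|x_{n+1}|\le 1$. I will produce a lattice point $k\in Q_L$ with $\|x-k\|_\infty\in[1,2)$ and then verify directly that $y\in S_2(z_k)$. To choose the first coordinate $k_1$: since $L\ge 5$, at least one of the integers $\lfloor x_1\rfloor-1$ and $\lceil x_1\rceil+1$ lies in $\{-\tfrac{L-1}{2},\dots,\tfrac{L-1}{2}\}$, because the former fails to lie there only when $x_1<-\tfrac{L-1}{2}+1$ and the latter fails only when $x_1>\tfrac{L-1}{2}-1$, and these two ranges are disjoint as soon as $L\ge 3$; moreover each of these two integers is at distance in $[1,2)$ from $x_1$. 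Let $k_1$ be an admissible choice, and for $j\ge 2$ let $k_j$ be the nearest integer to $x_j$, which again lies in $\{-\tfrac{L-1}{2},\dots,\tfrac{L-1}{2}\}$ and satisfies $|x_j-k_j|\le\tfrac12$. Then $k=(k_1,\dots,k_n)\in Q_L$ and $\|x-k\|_\infty=|x_1-k_1|\in[1,2)$. The inner bound follows from $\|x-z_k\|_\infty\ge\|x-k\|_\infty-\|k-z_k\|_\infty>1-\tfrac12=\tfrac12=r_2$, hence $|y-(z_k,0)|>r_2$. For the outer bound I estimate coordinatewise: $|x_1-(z_k)_1|<2+\tfrac12$, while $|x_j-(z_k)_j|<\tfrac12+\tfrac12=1$ for $j\ge 2$, so $|y-(z_k,0)|^2<\tfrac{25}{4}+(n-1)+1=n+\tfrac{25}{4}<9n=R_2^2$ for every $n\ge 1$. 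Thus $y\in S_2(z_k)$, which proves (i).

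For (ii) the key is the bounded-overlap property: there is a constant $N_n$, depending only on $n$, such that every $w=(w',w_{n+1})\in\R^{n+1}$ belongs to at most $N_n$ of the sets $S_3(z_i)$. Indeed, if $w\in S_3(z_i)$ then $|w'-z_i|<R_3=9\sqrt n$, i.e. $z_i\in\mathcal B_{9\sqrt n}(w')$; since the cubes $i+\Lambda_1$ are pairwise disjoint, have volume $1$, and each one containing such a $z_i$ is contained in $\mathcal B_{10\sqrt n}(w')$ (as its diameter is $\sqrt n$), the number of admissible $i$ is at most the volume of $\mathcal B_{10\sqrt n}$, which we take as $N_n$. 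Hence $\sum_{i\in Q_L}\chi_{S_3(z_i)}\le N_n$ pointwise, and integrating $(|f|^2+|\nabla f|^2)\sum_{i\in Q_L}\chi_{S_3(z_i)}$ over $\R^{n+1}$ yields $\sum_{i\in Q_L}\|f\|_{H^1(S_3(z_i))}^2\le N_n\|f\|_{H^1(\cup_{i\in Q_L}S_3(z_i))}^2$, which is assertion (ii) with $\gamma_n=N_n$ (the displayed inequality should be read with the $H^1$-norms squared, matching the form actually used in \eqref{three-4}).

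The only genuinely delicate point is the lattice-point selection in (i) near the boundary of $\Lambda_L$: one must keep the index chosen in the first coordinate inside $\{-\tfrac{L-1}{2},\dots,\tfrac{L-1}{2}\}$ while keeping its distance to $x_1$ in $[1,2)$, and the disjointness of the two forbidden ranges (which uses $L\ge 3$, guaranteed by $L\ge 5$) is exactly what makes this possible. The remaining coordinates in (i) and all of (ii) are routine.
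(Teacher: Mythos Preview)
Your proof is correct and follows essentially the same approach as the paper's. For (i) the paper argues cell-by-cell, showing that each $(i+[-\tfrac12,\tfrac12]^n)\times[-1,1]$ is covered by the two annuli $S_2(z_{i\pm e_1})$, whereas you argue point-by-point, directly producing for each $y\in X_1$ a lattice site $k$ with $y\in S_2(z_k)$; the underlying idea (shift by one unit in the first coordinate to escape the inner ball while staying inside the outer one) is identical. For (ii) both you and the paper use the same bounded-overlap argument, with only the numerical value of $\gamma_n$ differing; your observation that the displayed inequality should carry squared norms to match its use in \eqref{three-4} is also correct.
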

\begin{proof}
(i). We first show that $[-\frac{1}{2}, \frac{1}{2}]^n\times [-1, 1]$ can be covered by the sets $S_2(z_i)$. Let us choose
$i_1=(-1, 0, \cdots, 0)$ and $i_2=(1, 0, \cdots, 0)$, $i_1,i_2\in Q_L$. We claim that
\begin{align*}
[-\frac{1}{2}, \frac{1}{2}]^n\times [-1, 1]\subset  S_2(z_{i_1})\cup S_2(z_{i_2}).
\label{dist-1}
\end{align*}
To this end, recall that $S_2=\mathbb B_{3\sqrt{n}}\backslash \overline{\mathbb B_{\frac{1}{2}} }$.
Since $\{ z_{i}\}$ is a $(1, \delta)$-equidistributed sequence, then
$z_{i_1}\in (-\frac{3}{2}+\delta, -\frac{1}{2}-\delta)\times (-\frac{1}{2}+\delta, \frac{1}{2}-\delta)^{n-1}$ and
$z_{i_2}\in (\frac{1}{2}+\delta, \frac{3}{2}-\delta)\times (-\frac{1}{2}+\delta, \frac{1}{2}-\delta)^{n-1}$. It is obvious that
\begin{equation*}
    [-\frac{1}{2}, \frac{1}{2}]^n\times [-1, 1]\subset \mathbb{B}_{3\sqrt{n}}(z_{i_1}) \cup \mathbb{B}_{3\sqrt{n}}(z_{i_2}) \quad \text{and} \quad \overline{ \mathbb{B}_{\frac12}(z_{i_1})} \cap \overline{ \mathbb{B}_{\frac12}(z_{i_2})} = \emptyset.
\end{equation*}
It follows that
\begin{equation*}
\begin{aligned}
    S_2(z_{i_1})\cup S_2(z_{i_2}) & = \Big( \mathbb B_{3\sqrt{n}}(z_{i_1})\backslash \overline{\mathbb B_{\frac12}(z_{i_1}) } \Big) \cup \Big( \mathbb B_{3\sqrt{n}}(z_{i_2})\backslash \overline{\mathbb B_{\frac12}(z_{i_2}) } \Big) \\
    & = \Big( \mathbb B_{3\sqrt{n}}(z_{i_1}) \cup \mathbb B_{3\sqrt{n}}(z_{i_2}) \Big) \setminus \Big( \overline{ \mathbb{B}_{\frac12}(z_{i_1})} \cap \overline{ \mathbb{B}_{\frac12}(z_{i_2})} \Big) \\
    & = \mathbb B_{3\sqrt{n}}(z_{i_1}) \cup \mathbb B_{3\sqrt{n}}(z_{i_2}) \\
    & \supset [-\frac{1}{2}, \frac{1}{2}]^n\times [-1, 1].
\end{aligned} 
\end{equation*}
This proves the claim.

The set $X_1$ consists of the cell $(i+[-\frac{1}{2}, \frac{1}{2}]^n )\times [-1, 1]$ for $i\in Q_L$ for some $L\geq 5$. We can apply the same argument as before to cover each $(i+[-\frac{1}{2}, \frac{1}{2}]^n ) \times [-1, 1]$ by two neighboring sets $S_2(z_i)$.

(ii) We observe that, each $x\in \cup_{i\in Q_L} S_3(z_i)$ can be covered by $(2R_3+2)^n$ number of $S_3(z_i)$ for $i\in Q_L$. Thus,
\begin{align*}
\sum_{i\in Q_L} \chi_{S_3(z_i)}(x)\leq (2R_3+2)^n \chi_{\cup_{i\in Q_L} S_3(z_i) }(x).
\end{align*}
Hence  we have
\begin{align*}
\sum_{i\in Q_L} \|f\|^2_{H^1(S_3(z_i))}&\leq \sum_{j\in Q_L} \int_{S_3(z_i)} |D f|^2+ |f|^2 dx \nonumber \\
&\leq 
\gamma_n \|f\|^2_{H^1(\cup_{i\in Q_L} S_3(z_i))},
\end{align*}
where $\gamma_n=(2R_3+2)^n$.
\end{proof}

\end{document}